\documentclass[a4paper,12pt,reqno]{amsart}
\usepackage{amssymb,amsmath,amsfonts,latexsym} 
\usepackage{hyperref}
\usepackage{ifthen}
\usepackage{graphicx}
\usepackage{float}
\usepackage{mathrsfs}
\usepackage{comment}
\usepackage{enumerate}
\usepackage[foot]{amsaddr}
 \usepackage[left=25mm,right=25mm,top=25mm,bottom=25mm,paper=a4paper]{geometry}

\theoremstyle{plain}

\newtheorem*{thm*}{Theorem}
\numberwithin{equation}{section}

\newtheorem{thm}{Theorem}[section]
\newtheorem{cor}[thm]{Corollary}
\newtheorem{lem}[thm]{Lemma}
\newtheorem{prop}[thm]{Proposition}
\theoremstyle{definition}
\newtheorem{defn}[thm]{Definition}
\theoremstyle{remark}

\numberwithin{equation}{section}

\theoremstyle{definition}
\newcounter {own}
\def\theown {\thesection  .\arabic{own}}

\newenvironment{pf}[1][]{%
 \vskip 3mm
 \noindent
 \ifthenelse{\equal{#1}{}}%
  {{\slshape Proof. }}%
  {{\slshape #1.} }%
 }%
{\qed\bigskip}

\newcounter{alphabet}

\newcommand{\beas}{\begin{eqnarray*}}
\newcommand{\eeas}{\end{eqnarray*}}
\newcommand{\bes} {\begin{equation*}}
\newcommand{\ees} {\end{equation*}}
\newcommand{\be} {\begin{equation}}
\newcommand{\ee} {\end{equation}}
\newcommand{\bea} {\begin{eqnarray}}
\newcommand{\eea} {\end{eqnarray}}

\newcounter{minutes}
\divide\time by 60
\newcounter{hours}
\multiply\time by 60 \addtocounter{minutes}{-\time}

\begin{document}
 
\title{Adams type Dunkl Stein-Weiss inequality on Dunkl Morrey spaces on the real line }

\thanks{
File:~\jobname .tex,
          printed: \number\year-\number\month-\number\day,
          \thehours.\ifnum\theminutes<10{0}\fi\theminutes}

\author{Sourav Dutta}
 \address{Sourav Dutta, School of Mathematical and Statistical Sciences, Indian Institute of Technology Mandi, Kamand,
 Mandi, HP 175005, India \newline
Email: d23026@students.iitmandi.ac.in }

\author{Saswata Adhikari$^\dagger$}
\address{Saswata Adhikari, School of Mathematical and Statistical Sciences, Indian Institute of Technology Mandi, Kamand,
 Mandi, HP 175005, India. \newline
Email: saswata@iitmandi.ac.in \newline
$^\dagger$ {\tt Corresponding author}}





\begin{abstract}
In this paper, we study the weighted boundedness of the Dunkl fractional integral operator (i.e., Dunkl Stein-Weiss inequality) associated with the Dunkl operator on $\mathbb{R}$. Indeed, we obtain the Adams-type Dunkl Stein-Weiss inequality on Dunkl-Morrey spaces. Our result extends the classical Adams type Stein-Weiss inequality on Morrey space result to the Dunkl setting. Furthermore, we establish the weighted boundedness of the Dunkl fractional maximal function on Dunkl Morrey spaces. 
\end{abstract}

\maketitle
\pagestyle{myheadings}
\markboth{Sourav Dutta, Saswata Adhikari}{Adams type Dunkl Stein-Weiss inequality on Dunkl Morrey spaces on the real line}

\subjclass \textbf{Mathematics Subject Classification[2010]}  {Primary  42B10, 47G10 ; Secondary 42B20, 42B25}

\keywords {\textbf{Keywords} Dunkl operator, Generalized translation operator, Fractional integral operator, Stein-Weiss inequality, Morrey space.}
\section{Introduction} \label{sec:intro}
Hardy and Littlewood initially studied the one-dimensional fractional integral operator in \cite{ghl1}. They defined the fractional integral operator $I_{\gamma}$ on $(0, \infty)$ as 
\begin{equation}
    I_{\gamma}f(x)=\int\limits_0^\infty \frac{f(y)}{|x-y|^{1-\gamma}}dy, ~~0<\gamma<1. \label{eq:eq1}
\end{equation}
Under appropriate conditions on $p, q, \text{and}~\gamma$, they investigated the $(L^p, L^q)$ boundedness of $I_{\gamma}$. More precisely, they established the following:
\begin{thm}
    If $f\in L^p(0,\infty)$, where $1<p<\frac{1}{\gamma}$ with $\frac{1}{q}=\frac{1}{p}-\gamma$, then
\begin{equation}
    \|I_\gamma f\|_{L^q(0, \infty)} \le C\|f\|_{L^p(0, \infty)}, \label{eq:eq2}
\end{equation}
where $C$ denotes a positive constant independent of $f$.
\end{thm}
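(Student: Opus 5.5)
We prove the classical one-dimensional Hardy--Littlewood--Sobolev inequality by Hedberg's pointwise method. It combines a maximal-function bound for the local part of the integral with Hölder's inequality for the tail. Extending $f$ by zero to $\mathbb{R}$, we may assume $f\ge 0$ and regard $I_\gamma f(x)=\int_{\mathbb{R}}f(y)|x-y|^{\gamma-1}\,dy$ for $x\in\mathbb{R}$. Bounding its $L^q(\mathbb{R})$ norm then controls the $L^q(0,\infty)$ norm. We use the Hardy--Littlewood maximal function
\[
Mf(x)=\sup_{r>0}\frac{1}{2r}\int_{|x-y|<r}|f(y)|\,dy ,
\]
which is bounded on $L^p(\mathbb{R})$ for $1<p\le\infty$; this is a standard fact we take as known.

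Fix $x$ and a parameter $\delta>0$, and split $I_\gamma f(x)=\int_{|x-y|<\delta}+\int_{|x-y|\ge\delta}$.

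For the near part, decompose dyadically into the shells $2^{-k-1}\delta\le|x-y|<2^{-k}\delta$. On each shell $|x-y|^{\gamma-1}\le (2^{-k-1}\delta)^{\gamma-1}$, and the integral of $f$ over the ball of radius $2^{-k}\delta$ is at most $2\cdot 2^{-k}\delta\,Mf(x)$. Since $\gamma>0$, summing the resulting geometric series gives
\[
\int_{|x-y|<\delta}\le C\,\delta^{\gamma}Mf(x).
\]

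For the far part, apply Hölder's inequality with the exponent $p'$ conjugate to $p$:
\[
\int_{|x-y|\ge\delta}\le \|f\|_p\Big(\int_{|t|\ge\delta}|t|^{(\gamma-1)p'}\,dt\Big)^{1/p'}.
\]
This integral converges exactly when $(1-\gamma)p'>1$, which is equivalent to $\gamma<1/p$, i.e.\ to the hypothesis $p<1/\gamma$. It equals $C\delta^{\gamma-1/p}$, so the far part is at most $C\delta^{\gamma-1/p}\|f\|_p$.

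Now optimize in $\delta$. Assuming $f\neq0$ and $Mf(x)>0$, choose $\delta=(\|f\|_p/Mf(x))^{p}$, which balances the two terms. This yields the pointwise estimate
\[
I_\gamma f(x)\le C\,Mf(x)^{1-\gamma p}\|f\|_p^{\gamma p}.
\]
Since $1/q=1/p-\gamma$, we have $q(1-\gamma p)=p$. Raising to the power $q$ and integrating gives
\[
\|I_\gamma f\|_q^q\le C\|f\|_p^{\gamma p q}\|Mf\|_p^{p}\le C\|f\|_p^{\gamma pq+p}=C\|f\|_p^{q},
\]
using $\gamma pq+p=q$. Taking $q$-th roots gives the claimed inequality.

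The main point requiring care is the tail estimate: it is exactly where $p<1/\gamma$ enters, and the exponent bookkeeping in the balancing step must match the relation $1/q=1/p-\gamma$. The other essential input is $p>1$, which is needed for the $L^p$ boundedness of $M$.
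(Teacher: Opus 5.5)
Your proof is correct: it is Hedberg's pointwise method (dyadic shells plus the maximal function for $|x-y|<\delta$, H\"older for $|x-y|\ge\delta$, then optimizing $\delta$ and invoking the $L^p$-boundedness of $M$), and your exponent bookkeeping $q(1-\gamma p)=p$, $\gamma pq+p=q$ checks out. The paper gives no proof of this classical statement and only cites Hardy--Littlewood for it, but your scheme is the one the paper uses for Lemma~\ref{lem2}; with $\nu=-\tfrac12$ and $p=q$ that lemma reduces essentially to this statement, and its annulus-by-annulus Morrey-norm tail estimate becomes your single H\"older step.
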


The $n$-dimensional analogue of \eqref{eq:eq1} is given by
\begin{equation}    
I_{\gamma}f(x)=\int\limits_{\mathbb{R}^n} \frac{f(y)}{|x-y|^{n-\gamma}}dy, ~~0<\gamma<n. \label{eq:eq3}
\end{equation}
In \cite{sls}, Sobolev proved the following:
\begin{thm}
    If $1<p<\frac{n}{\gamma}$ and $f\in L^p(\mathbb{R}^n)$, with $\frac{1}{q}=\frac{1}{p}-\frac{\gamma}{n}$, then
\begin{equation}
    \|I_\gamma f\|_{L^q(\mathbb{R}^n)} \le C\|f\|_{L^p({\mathbb{R}^n})}, \label{eq:eq4}
\end{equation}
where $C$ denotes a positive constant independent of $f$. This inequality is known as Hardy-Littlewood-Sobolev inequality.
\end{thm}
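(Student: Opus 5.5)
The plan is Hedberg's pointwise argument: dominate $I_\gamma f$ by the Hardy--Littlewood maximal function $Mf$, then use the $L^p$-boundedness of $M$ for $p>1$.

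\textbf{Step 1 (splitting).} We may assume $f\ge 0$ and $\|f\|_{L^p}=1$; the general case follows by homogeneity and $|I_\gamma f|\le I_\gamma|f|$. Fix $x\in\mathbb{R}^n$ and a radius $\delta>0$, to be chosen later depending on $x$. Split
\[
I_\gamma f(x)=\int_{|x-y|<\delta}\frac{f(y)}{|x-y|^{n-\gamma}}\,dy+\int_{|x-y|\ge\delta}\frac{f(y)}{|x-y|^{n-\gamma}}\,dy=:A+B .
\]

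\textbf{Step 2 (local part).} Decompose $\{|x-y|<\delta\}$ into the dyadic annuli $2^{-k-1}\delta\le|x-y|<2^{-k}\delta$, $k\ge 0$. On the $k$-th annulus the kernel is at most $(2^{-k-1}\delta)^{\gamma-n}$, and the integral of $f$ over the ball of radius $2^{-k}\delta$ is at most $C(2^{-k}\delta)^n Mf(x)$. Summing the resulting geometric series, which converges because $\gamma>0$, gives $A\le C\delta^{\gamma}Mf(x)$.

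\textbf{Step 3 (tail).} By H\"older with the conjugate exponent $p'$,
\[
B\le \|f\|_{L^p}\Big(\int_{|z|\ge\delta}|z|^{-(n-\gamma)p'}\,dz\Big)^{1/p'} .
\]
This integral converges exactly when $(n-\gamma)p'>n$, which is equivalent to $p<n/\gamma$. Evaluating it gives $B\le C\delta^{\gamma-n/p}=C\delta^{-n/q}$.

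\textbf{Step 4 (optimizing $\delta$).} Choose $\delta$ so that the two bounds balance, namely $\delta^{n/p}=Mf(x)^{-1}$. If $Mf(x)=0$ then $f=0$ and there is nothing to prove; if $Mf(x)=\infty$, which happens only on a null set, the estimate is trivial. With this choice both terms are bounded by $C\,Mf(x)^{1-\gamma p/n}$. Since $1-\gamma p/n=p/q$, we obtain the pointwise bound
\[
I_\gamma f(x)\le C\,Mf(x)^{p/q}.
\]

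\textbf{Step 5 (conclusion).} Raise the pointwise bound to the power $q$ and integrate:
\[
\|I_\gamma f\|_q^q\le C\int (Mf)^p\,dx\le C\|f\|_p^p=C .
\]
The second inequality is the Hardy--Littlewood maximal theorem, which needs $p>1$. By homogeneity this gives the stated inequality.

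The only real obstacle is getting the exponents to fit. The tail estimate in Step 3 forces $p<n/\gamma$, and the maximal theorem in Step 5 forces $p>1$; these are exactly the hypotheses of the statement. Once those are in place, the identity $1-\gamma p/n=p/q$ is what makes the powers of $Mf$ line up in Step 4.
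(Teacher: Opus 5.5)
Your proof is correct. It is Hedberg's argument, and your exponent bookkeeping checks out: the tail integral converges iff $p<n/\gamma$, it gives $\delta^{\gamma-n/p}=\delta^{-n/q}$, and $1-\gamma p/n=p/q$. The paper does not prove this theorem (it is quoted from Sobolev), but your scheme is exactly the one it uses in the proof of Lemma~\ref{lem2} for the Dunkl--Morrey analogue: a dyadic local part bounded by $r^{\gamma}M^\nu f$, a H\"older estimate for the tail, a choice of $r$ that balances the two terms, and then the maximal theorem.
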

Therefore, it is natural to ask whether the inequalities \eqref{eq:eq2} and \eqref{eq:eq4} remain valid if the classical Lebesgue norm is replaced by the weighted Lebesgue norm 
\begin{eqnarray*}
    \|f\|_{p, \alpha}=\left(\int_{\mathbb{R}^n}|f(x)|^p|x|^{\alpha p}dx \right)^{\frac{1}{p}},~p>1,~\alpha \in \mathbb{R}.
\end{eqnarray*}

In \cite{ghl1}, Hardy and Littlewood established one such fundamental result for the one-dimensional case on $(0,\infty)$. Later, Stein and Weiss obtained such a weighted $(L^p, L^q)$ boundedness result of the fractional integral operator for the n-dimensional case in \cite{stw}, which is known as the Stein-Weiss inequality or the weighted Hardy-Littlewood-Sobolev inequality, stated as follows:
\begin{thm}
    Let $0<\gamma<n$, $1<p<\infty$, $\alpha<\frac{n}{p^\prime}$, $\beta<\frac{n}{q}$, $0\le\alpha+\beta\le\gamma$ and $\frac{1}{q}=\frac{1}{p}-\frac{\gamma-\alpha-\beta}{n}$. Then
\begin{eqnarray}
    \left(\int\limits_{\mathbb{R}^n}\{|I_{\gamma}f(x)||x|^{-\beta}\}^q~dx \right)^{1/q} \le C \left(\int\limits_{\mathbb{R}^n}\{|f(x)||x|^{\alpha}\}^{p}~dx \right)^{1/p}. \label{eq:eq5}
\end{eqnarray}
\end{thm}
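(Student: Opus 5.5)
We prove the Stein--Weiss inequality \eqref{eq:eq5} by decomposing $\mathbb{R}^n\times\mathbb{R}^n$ according to the relative sizes of $|x|$ and $|y|$, and we may take $f\ge 0$. For each $x$ we write
\begin{equation*}
|x|^{-\beta}I_\gamma f(x)=\Big(\int_{|y|<|x|/2}+\int_{|x|/2\le|y|\le 2|x|}+\int_{|y|>2|x|}\Big)\frac{|x|^{-\beta}f(y)}{|x-y|^{n-\gamma}}\,dy =: T_1f(x)+T_2f(x)+T_3f(x).
\end{equation*}
Putting $g(y)=|y|^{\alpha}f(y)$, so that $\|g\|_{L^p}=\|f\|_{p,\alpha}$, it suffices to bound each $T_i$ from $L^p$ (in terms of $g$) to $L^q$.

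\textbf{The terms $T_1$ and $T_3$.} In $T_1$ we have $|x-y|\sim|x|$, hence
\begin{equation*}
T_1f(x)\lesssim |x|^{\gamma-n-\beta}\int_{|y|<|x|/2}|y|^{-\alpha}g(y)\,dy ,
\end{equation*}
which is a Hardy-type operator with power weights. In $T_3$ we have $|x-y|\sim|y|$, hence
\begin{equation*}
T_3f(x)\lesssim |x|^{-\beta}\int_{|y|>2|x|}|y|^{\gamma-n-\alpha}g(y)\,dy ,
\end{equation*}
which is the dual Hardy operator. Both are handled by the weighted Hardy inequalities for $p\le q$. The explicit route is to pass to polar coordinates, replace $g$ by its spherical average, and apply H\"older on dyadic shells. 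The resulting one-dimensional power-weight condition (Bradley/Muckenhoupt type) is satisfied exactly when the following hold:
\begin{itemize}
\item $\alpha<n/p'$, which makes $\int_{|y|<R}|y|^{-\alpha p'}\,dy$ finite and is needed for $T_1$;
\item $\beta<n/q$, which makes $\int_{|x|<R}|x|^{-\beta q}\,dx$ finite and is needed for $T_3$;
\item the homogeneity relation $\frac1q=\frac1p-\frac{\gamma-\alpha-\beta}{n}$, which makes the Muckenhoupt constant scale-invariant and hence bounded.
\end{itemize}
The hypothesis $p\le q$ follows from $\alpha+\beta\le\gamma$.

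\textbf{The local term $T_2$ (main obstacle).} On the annulus $|y|\sim|x|$ we have $|x|^{-\beta}\sim|x|^{-\beta}|y|^{\alpha}|y|^{-\alpha}\sim |x|^{-(\alpha+\beta)}|y|^{\alpha}$. Since the pieces for different dyadic annuli $A_k=\{2^k\le|x|<2^{k+1}\}$ have bounded overlap, we get
\begin{equation*}
T_2f(x)\lesssim 2^{-k(\alpha+\beta)}I_\gamma\big(g\chi_{\tilde A_k}\big)(x),\qquad x\in A_k,
\end{equation*}
where $\tilde A_k$ is the enlarged annulus. We split the kernel on $\tilde A_k$ once more. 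Since $|x-y|\le 2^{k+3}$ there, we use $|x-y|^{\gamma-n}\le C\,2^{k(\alpha+\beta)}|x-y|^{(\gamma-\alpha-\beta)-n}$, which is valid precisely because $\alpha+\beta\ge0$. The bound then reduces to the unweighted operator $I_{\gamma-\alpha-\beta}$, and by the Hardy--Littlewood--Sobolev inequality \eqref{eq:eq4} (Sobolev's theorem) it maps $L^p\to L^q$ with $\frac1q=\frac1p-\frac{\gamma-\alpha-\beta}{n}$.

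The edge case $\alpha+\beta=\gamma$, where $q=p$, needs separate care. There $I_0$ is not an integral operator, but on $\tilde A_k$ the kernel $2^{-k\gamma}|x-y|^{\gamma-n}$ has $L^1$ norm bounded uniformly in $k$, so Young's inequality gives the bound instead.

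Summing over $k$ using $\sum_k\|g\chi_{\tilde A_k}\|_p^q\lesssim\|g\|_p^q$ (valid since $q\ge p$) completes the estimate for $T_2$, and together with the bounds for $T_1$ and $T_3$ this proves \eqref{eq:eq5}. We expect the bookkeeping in $T_2$, namely the exponent trade that uses $0\le\alpha+\beta\le\gamma$, to be the delicate point.
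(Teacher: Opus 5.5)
Your argument is correct; it is the classical proof. The paper does not prove this statement itself (it is quoted from Stein and Weiss). However, your three regions $|y|<|x|/2$, $|x|/2\le|y|\le 2|x|$, $|y|>2|x|$ are exactly those the paper uses for its Dunkl--Morrey analogue, Theorem~\ref{thm1}. Your handling of the middle region also matches the paper's:
\begin{itemize}
\item when $\alpha+\beta<\gamma$, you dominate by $I_{\gamma-\alpha-\beta}(|\cdot|^{\alpha}f)$ using $\alpha+\beta\ge 0$. The paper does this pointwise on the whole middle region, since $|x|^{-(\alpha+\beta)}\lesssim|x-y|^{-(\alpha+\beta)}$ there. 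Your dyadic localization is harmless but unnecessary.
\item when $\alpha+\beta=\gamma$, you use dyadic annuli plus Young's inequality with a truncated kernel of uniformly bounded $L^1$ norm, as the paper does.
\end{itemize}

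The genuine difference is in the outer regions. The paper bounds $J_1$ and $J_3$ pointwise by a power of $|y|$ times the Morrey norm. It then integrates $|y|^{-\frac{s}{t}d_\nu}$ against the test ball, which converges only because $s<t$. In the Lebesgue setting the same pointwise estimate only gives $T_1f(x)+T_3f(x)\lesssim|x|^{-n/q}\|g\|_{L^p}$. That is a weak-$L^q$ bound, not an $L^q$ bound.

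So your appeal to the weighted Hardy inequalities is genuinely needed. Concretely, this is H\"older on dyadic shells followed by a discrete Young inequality. The hypotheses $\alpha<n/p'$ and $\beta<n/q$ supply the geometric factors $2^{-(j-k)(n/p'-\alpha)}$ for $k\le j$ and $2^{-(k-j)(n/q-\beta)}$ for $k\ge j$. This is also exactly where the paper's method breaks down at $s=t$, which is why the paper defers that case to Gorbachev et al.
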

Lieb \cite{lieb} determined the sharp constant for the Hardy-Littlewood-Sobolev inequality in the Euclidean setting. In the same paper, using the method of rearrangement technique and symmetrization, he also proved the existance of  extremals of \eqref{eq:eq5} in certain cases. The Hardy-Littlewood-Sobolev inequality on the Heisenberg group was formulated by Folland and Stein; refer to \cite{folland}, and the sharp constant in this scenario was derived by Frank and Lieb; see \cite{frank}. In \cite{guliyev}, Guliyev et al. derived the Stein-Weiss inequality for Carnot groups. Futhermore, the Hardy-Littlewood-Sobolev and Stein-Weiss inequalities were established on homogeneous Lie groups in \cite{kassy1,kassy2,ruzhan}. 

In 1938, Charles Morrey introduced Morrey spaces \cite{cbm} to investigate the local behavior of second-order elliptic partial differential equations. Over the years, these spaces have proven to be useful as powerful tools in fields such as potential theory, harmonic analysis and partial differential equations. 
For $1\leq p\leq q<\infty$, the (classical) Morrey space on $\mathbb{R}^{n}$ is defined by
\begin{eqnarray*}
L^{p,q}(\mathbb{R}^{n})=\{f\in L_{loc}^{p}(\mathbb{R}^{n}):\|f\|_{L^{p,q}}<\infty\},
\end{eqnarray*}
where $\|f\|_{L^{p,q}}=\sup\limits_{a\in\mathbb{R}^{n},r>0} r^{n(\frac{1}{q}-\frac{1}{p})}\bigg(\int\limits_{B(a,r)} |f(x)|^{p}dx\bigg)^{\frac{1}{p}}$. For $p=q$, $L^{p,q}(\mathbb{R}^{n})$ is the Lebesgue space $L^{p}(\mathbb{R}^{n})$. \\

 The boundedness of $I_{\gamma}$ on Morrey spaces was first studied by Spanne \cite{jp}. He proved the following:
\begin{thm}
Let $1<p<q<\frac{n}{\gamma},~\frac{1}{s}=\frac{1}{p}-\frac{\gamma}{n}$ and $\frac{1}{t}=\frac{1}{q}-\frac{\gamma}{n}$. Then
    \begin{eqnarray*}
    \|I_{\gamma}f\|_{L^{s,t}(\mathbb{R}^n)}\le C\|f\|_{L^{p,q}(\mathbb{R}^n)}, \forall f\in L^{p,q}(\mathbb{R}^n),
\end{eqnarray*}
where $C$ denotes a positive constant independent of $f$. 
\end{thm}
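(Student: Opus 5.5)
The plan is to prove Spanne's theorem by the classical Hedberg-type splitting of $I_\gamma f$ into a local and a global part, followed by the Hardy--Littlewood--Sobolev inequality (the theorem of Sobolev above) for the local part and a pointwise dyadic estimate for the global part.

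Fix a ball $B=B(a,r)$ and write $f=f_1+f_2$ with $f_1=f\chi_{B(a,2r)}$ and $f_2=f\chi_{\mathbb{R}^n\setminus B(a,2r)}$. It suffices to show
\begin{equation*}
r^{n(\frac1t-\frac1s)}\Big(\int_B|I_\gamma f_i|^s\Big)^{1/s}\le C\|f\|_{L^{p,q}}
\end{equation*}
for $i=1,2$, with $C$ independent of $a$ and $r$.

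\emph{Local part.} Since $1<p<n/\gamma$ (because $p<q<n/\gamma$) and $\frac1s=\frac1p-\frac\gamma n$, the Hardy--Littlewood--Sobolev inequality gives
\begin{equation*}
\|I_\gamma f_1\|_{L^s(B)}\le C\|f\|_{L^p(B(a,2r))}\le C\,(2r)^{n(\frac1p-\frac1q)}\|f\|_{L^{p,q}}.
\end{equation*}
Now observe that $\frac1t-\frac1s=\frac1q-\frac1p$, so the powers of $r$ cancel exactly.

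\emph{Global part.} For $x\in B$ and $y\notin B(a,2r)$ we have $|x-y|\approx|a-y|$. Decomposing into annuli $2^{k}r\le|a-y|<2^{k+1}r$ for $k\ge1$ and applying H\"older's inequality on each annulus gives
\begin{equation*}
|I_\gamma f_2(x)|\le C\sum_{k\ge1}(2^kr)^{\gamma-n}\int_{B(a,2^{k+1}r)}|f|\le C\sum_{k\ge1}(2^kr)^{\gamma-n}(2^kr)^{n(1-\frac1p)}(2^kr)^{n(\frac1p-\frac1q)}\|f\|_{L^{p,q}}.
\end{equation*}
The exponent of $2^kr$ is $\gamma-\frac nq=-\frac nt<0$, which holds precisely because $q<n/\gamma$. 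Hence the geometric series converges and $|I_\gamma f_2(x)|\le C r^{-n/t}\|f\|_{L^{p,q}}$. Integrating over $B$ then yields
\begin{equation*}
r^{n(\frac1t-\frac1s)}\|I_\gamma f_2\|_{L^s(B)}\le C r^{n(\frac1t-\frac1s)}\,r^{-\frac nt}\,r^{\frac ns}\|f\|_{L^{p,q}}=C\|f\|_{L^{p,q}}.
\end{equation*}

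Taking the supremum over $a$ and $r$ completes the proof. The main point to check is the convergence of the tail sum, which is exactly where the hypothesis $q<n/\gamma$ is used; the rest is bookkeeping of exponents. One should also verify that $I_\gamma f$ is well defined for $f\in L^{p,q}$, and this follows from the same tail estimate.
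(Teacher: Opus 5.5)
Your argument is correct. The HLS bound on $f\chi_{B(a,2r)}$ produces exactly the factor $r^{n(\frac1p-\frac1q)}$, which cancels $r^{n(\frac1t-\frac1s)}$ because $\frac1t-\frac1s=\frac1q-\frac1p$. The annular tail sum has exponent $\gamma-\frac nq=-\frac nt<0$, so it converges.

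The paper does not prove this statement; it is quoted from Spanne as background. The natural comparison is therefore with the proof of Lemma~\ref{lem2}, the Dunkl analogue of Adams' theorem. There the paper splits the kernel integral, not $f$, at a radius $r$ that depends on the point. The near part is at most $Cr^{\gamma}M^\nu f(-x)$ and the far part at most $Cr^{\gamma-\frac{d_\nu}{q}}\|f\|_{L^{p,q}}$. Optimizing in $r$ gives the pointwise bound $|I_\gamma^\nu f(x)|\le C\|f\|_{L^{p,q}}^{1-\frac qt}(M^\nu f(-x))^{\frac qt}$. The relation $\frac{sq}{t}=p$ and the Morrey boundedness of the maximal operator (Lemma~\ref{lem1}) then finish the proof. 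That is the genuinely Hedberg-type argument. Yours is a local/global spatial decomposition, so the label in your first sentence is a misnomer.

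\emph{What your route buys.} It is more elementary: it needs only the Lebesgue HLS inequality and no maximal theorem on Morrey spaces.

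\emph{What the paper's route buys.} It is stronger, because it reaches the Adams exponent $s_A=\frac{pt}{q}$. Since $\frac{1}{s_A}=\frac1p-\frac qp\cdot\frac{\gamma}{n}<\frac1s$, H\"older's inequality on balls gives $L^{s_A,t}\subset L^{s,t}$, so Spanne's theorem follows from it. Your local step uses only $\|f\|_{L^p(B(a,2r))}$ and discards the Morrey information at smaller scales inside $B(a,2r)$, so it cannot improve on the Sobolev exponent $s$. The pointwise method also never truncates $f$ to balls. That is convenient in the Dunkl setting, where the Morrey norm is defined through the generalized translations $\tau^\nu_x$ rather than through restrictions of $f$ to balls.
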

Later, Adams \cite {dra} reproved the boundedness of $I_{\gamma}$ on Morrey spaces and obtained a stronger result. He showed that
\begin{thm}
   Let $1<p\le q<\frac{n}{\gamma}$. Assume that the parameters $s$ and $t$ satisfy 
   \begin{eqnarray*}
       1<s\le t<\infty, \frac{1}{t}=\frac{1}{q}-\frac{\gamma}{n}, \frac{s}{t}=\frac{p}{q}.
   \end{eqnarray*}
   Then
   \begin{eqnarray*}
       \|I_{\gamma}f\|_{L^{s,t}(\mathbb{R}^n)}\le C\|f\|_{L^{p,q}(\mathbb{R}^n)}, \forall f\in L^{p,q}(\mathbb{R}^n),
   \end{eqnarray*}
  where $C$ denotes a positive constant independent of $f$.      
\end{thm}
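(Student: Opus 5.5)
The plan is Hedberg's pointwise approach: bound $I_\gamma f$ by the Hardy--Littlewood maximal function $Mf$ and by the Morrey norm, then use that $M$ is bounded on Morrey spaces. Here $L^{p,q}$ is normalised by $r^{n(1/q-1/p)}$, so $q$ is the scaling exponent and $p$ the local integrability exponent. The constraint $s/t=p/q$ is what makes the interpolation of the two pointwise bounds land in the correct space.

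\textbf{Pointwise estimate.} Fix $x$ and $\delta>0$ and split
\[
I_\gamma f(x)=\int_{|x-y|<\delta}\frac{f(y)}{|x-y|^{n-\gamma}}\,dy+\int_{|x-y|\ge\delta}\frac{f(y)}{|x-y|^{n-\gamma}}\,dy .
\]
For the local part, decompose into dyadic annuli $2^{-k-1}\delta\le|x-y|<2^{-k}\delta$. This gives the bound $C\delta^{\gamma}Mf(x)$.

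For the far part, decompose into the annuli $2^k\delta\le|x-y|<2^{k+1}\delta$. On each annulus, Hölder's inequality and the definition of the norm give
\[
\int_{B(x,2^{k+1}\delta)}|f|\le C(2^k\delta)^{n-n/p}\Big(\int_{B(x,2^{k+1}\delta)}|f|^p\Big)^{1/p}\le C(2^k\delta)^{n-n/q}\|f\|_{L^{p,q}} .
\]
Each annulus therefore contributes $C(2^k\delta)^{\gamma-n/q}\|f\|_{L^{p,q}}$. The geometric series converges because $\gamma<n/q$, so the far part is at most $C\delta^{\gamma-n/q}\|f\|_{L^{p,q}}$.

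\textbf{Optimisation.} Choose $\delta$ to balance the two terms, i.e. $\delta^{n/q}=\|f\|_{L^{p,q}}/Mf(x)$. This yields
\[
|I_\gamma f(x)|\le C\,(Mf(x))^{1-\gamma q/n}\,\|f\|_{L^{p,q}}^{\gamma q/n}.
\]
Since $1/t=1/q-\gamma/n$, we have $1-\gamma q/n=q/t$. Hence $|I_\gamma f|\le C(Mf)^{q/t}\|f\|^{1-q/t}$.

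\textbf{Morrey norm of the power.} For any ball $B=B(a,r)$, raise the pointwise bound to the power $s$. Because $s=pt/q$, we get $|I_\gamma f|^s\le C(Mf)^{p}\|f\|^{s-p}$. Consequently
\[
r^{n(1/t-1/s)}\Big(\int_B|I_\gamma f|^s\Big)^{1/s}\le C\|f\|^{1-p/s}\,\Big[r^{n(1/q-1/p)}\Big(\int_B (Mf)^p\Big)^{1/p}\Big]^{p/s}.
\]
The exponents match because $(1/t-1/s)\,s/p=(1/q-1/p)\cdot(s/p)(p/s)$, using $s/t=p/q$. It then suffices to know $\|Mf\|_{L^{p,q}}\le C\|f\|_{L^{p,q}}$ for $p>1$.

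\textbf{Main obstacle.} The step requiring genuine work is the Morrey boundedness of $M$. On a ball $B$, write $f=f\chi_{2B}+f\chi_{(2B)^c}$. The first piece is handled by the $L^p$ boundedness of $M$. For the second piece, $M(f\chi_{(2B)^c})(x)$ on $B$ is controlled by $\sup_{\rho\ge r}\rho^{-n}\int_{B(a,2\rho)}|f|$. Using the same Hölder step as above, this is at most $Cr^{-n/q}\|f\|_{L^{p,q}}$. Integrating over $B$ then gives the required bound.
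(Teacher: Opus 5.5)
Your proof is correct and follows the same Hedberg-type route the paper uses for its Dunkl analogue (Lemma~\ref{lem2}). You split at a radius, bound the local part by $\delta^{\gamma}Mf(x)$ and the tail by $\delta^{\gamma-n/q}\|f\|_{L^{p,q}}$, optimise to get $|I_\gamma f|\le C\|f\|_{L^{p,q}}^{1-q/t}(Mf)^{q/t}$, and then use $s/t=p/q$ with the Morrey boundedness of $M$. The paper only cites this classical theorem from Adams and takes the maximal-function bound from the literature, whereas your sketch of that bound (via $f\chi_{2B}+f\chi_{(2B)^c}$) is also correct.
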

In recent years, the weighted inequalities have been extensively studied on Morrey spaces. In \cite{tan}, Tanaka first carried out pioneering work in this direction. Motivated by this work, K.~P.~Ho studied the Stein-Weiss inequality on Morrey spaces in the Spanne range in \cite{kpho}, which is referred to as the Spanne type Stein-Weiss inequality. Kassymov et al. obtained the Stein-Weiss inequality in the Adams range on Morrey spaces associated with general homogeneous groups in \cite{kassy} and it is known as the Admas-type  Stein-Weiss inequality.\\

  Motivated by the above developments, it is natural to investigate whether the analogues of the Adams-type Stein-Weiss inequality remain valid on the Dunkl Morrey space.  In this paper, we restrict ourselves to the one-dimensional case. The Dunkl operators are differential-difference operators introduced and first studied by C.~F.~ Dunkl in the context of a theory of generalized spherical harmonics; for references, see \cite {cd1,cd2,cd3,cd4,cd5}. In \cite{cd5}, using the Dunkl kernel, Dunkl defined the Dunkl transform $\mathcal{F}_{\nu}$ which enjoys similar properties analogous to the classical Fourier transform. Several authors have studied the fundamental properties of the Dunkl transform; see, for instance, \cite{dejeu,cd5,ros2,ros3}. Given the fundamental role of the Fourier transform in analysis, it is quite rational to explore whether the results known in the Fourier setting remain valid for the Dunkl setting. Recently, in \cite{gorba}, Gorbachev et al. proved the Stein-Weiss inequality for the Dunkl fractional integral operator on the Lebesgue spaces within the Dunkl framework for any reflection group. Furthermore, they derived the sharp constant of the Stein-Weiss inequality, generalizing the classical results of Beckner \cite{beck} and Samko \cite{samko2}. In \cite{sa}, Adhikari et al. proved the existence of extremals of the Stein-Weiss inequality on the Lebesgue space for the Dunkl fractional integral operator in certain cases. \\

  We organize the paper as follows. In Section~\ref{sec:2}, we provide a brief introduction to Dunkl theory on the real line and some known results. In Section~\ref{sec3}, we prove the boundedness of the Dunkl fractional integral operator on Dunkl Morrey spaces. Finally, Section~\ref{sec4} is devoted to the proof of our main result, the Dunkl Stein--Weiss--Adams inequality on Dunkl Morrey spaces. Further, as a consequence of the above result, we obtain the weighted boundedness of the Dunkl fractional maximal function on Dunkl Morrey spaces.
  
  Finally, we mention that C represents a suitable positive constant that need not be the same in every occurrence.  

 \section{Preliminaries} \label{sec:2}
 Let $\nu\geq-\frac{1}{2}$ be a fixed real number. For $f\in C^{\infty}(\mathbb{R})$, the Dunkl operator associated with $\nu$ is defined by 
\begin{eqnarray*}
\Lambda_{\nu} f(x)=\frac{df}{dx} (x)+\frac{2\nu+1}{x} \frac{f(x)-f(-x)}{2},~x\in\mathbb{R}.
\end{eqnarray*}
Observe that $\Lambda_{-\frac{1}{2}}=\frac{d}{dx}$, indicating that the Dunkl derivative coincides with the classical derivative when $\nu=-\frac{1}{2}$.

For $\nu\ge -\frac{1}{2}, \lambda\in\mathbb{C}$, the initial value problem 
\begin{eqnarray*}
\Lambda_{\nu} f(x)=\lambda f(x), f(0)=1, 
\end{eqnarray*}
has a unique solution, denoted by $E_\nu(\lambda )$, which is called the Dunkl kernel \cite{dejeu,cd4,smsf} and is given by 
\begin{eqnarray*}
E_{\nu}(\lambda x)=j_{\nu}(i\lambda x)+\frac{\lambda x}{2(\nu+1)}j_{\nu+1}(i\lambda x),~x\in\mathbb{R},
\end{eqnarray*}
where $j_{\nu}$ is the normalized Bessel function of the first kind and of order $\nu$, defined by 
\begin{eqnarray*}
j_{\nu}(\lambda x)&=\left\{
\begin{array}{l l}
2^{\nu}\Gamma (\nu+1) \frac{J_{\nu}(\lambda x)}{(\lambda x)^{\nu}},& \quad \text{if~$\lambda x\neq 0$,}\\
1, & \quad \text{if~$\lambda x=0$,}
\end{array}\right.
\end{eqnarray*}
where $J_{\nu}$ is the Bessel function of the first kind and of order $\nu$. Note that $E_{-\frac{1}{2}}(\lambda x)=e^{\lambda x}$.

 The weighted Lebesgue measure on $\mathbb{R}$, denoted by $\mu_{\nu}$, is given by 
\begin{eqnarray*}
d\mu_{\nu}(x):=(2^{\nu+1}\Gamma(\nu+1))^{-1}|x|^{2\nu+1}dx.
\end{eqnarray*}

For $1\leq p\leq\infty$, the space $L^{p}(\mathbb{R},d\mu_\nu)$ denotes the set of all complex-valued measurable functions on $\mathbb{R}$ such that 
\begin{eqnarray*}
\|f\|_{L^{p}(\mathbb{R},d\mu_\nu)}:=\bigg(\int\limits_{\mathbb{R}}|f(x)|^{p}d\mu_{\nu}(x)\bigg)^{\frac{1}{p}}<\infty, if ~p~\in [1,\infty),
\end{eqnarray*}
and
\begin{eqnarray*}
\|f\|_{L^{\infty}(\mathbb{R},d\mu_\nu)}:
=\operatorname*{ess\,sup}_{x\in\mathbb{R}}|f(x)|<\infty,~ if~ p=\infty. 
\end{eqnarray*}

The Dunkl kernel corresponds to an integral transform known as the Dunkl transform which was studied in \cite{dejeu}. For $f\in L^{1}(\mathbb{R},d\mu_\nu)$, the Dunkl transform $\mathcal{F}_{\nu}f$ is defined by 
\begin{eqnarray*}
\mathcal{F}_{\nu} (f) (\lambda)=\int\limits_{\mathbb{R}} f(x) E_{\nu} (-i\lambda x)  d\mu_{\nu} (x),~\lambda\in\mathbb{R}.
\end{eqnarray*}
The above integral makes sense as $|E_{\nu}(ix)|\leq 1~\forall~x\in\mathbb{R}$ (see \cite{ros1}). Note that $\mathcal{F}_{-\frac{1}{2}}$ coincides with the classical Fourier transform $\mathcal{F}$ given by 
\begin{eqnarray*}
\mathcal{F} f(\lambda)= (2\pi)^{-\frac{1}{2}}\int\limits_{\mathbb{R}} f(x) e^{-i\lambda x} dx,~\lambda\in\mathbb{R}.
\end{eqnarray*}

The Dunkl transform satisfies the following properties:
\begin{thm}[\cite {trim}]
\leavevmode
\begin{itemize}
\item [(i)] For all $f\in L^{1}(\mathbb{R},d\mu_\nu)$, one has $\|\mathcal{F}_{\nu} (f)\|_{L^{\infty}(\mathbb{R},d\mu_\nu)}\leq \|f\|_{L^{1}(\mathbb{R},d\mu_\nu)}$.
\item [(ii)] For all $f\in\mathcal{S}(\mathbb{R})$, 
\begin{eqnarray*}
\mathcal{F}_{\nu} (\Lambda_\nu f)(\lambda)=i\lambda\mathcal{F}_{\nu} (f) (\lambda),~~\lambda\in\mathbb{R}.
\end{eqnarray*}
\item [(iii)] The Dunkl transform $\mathcal{F}_{\nu}$ is an isometric isomorphism from $L^{2}(\mathbb{R},d\mu_\nu)$ onto $L^{2}(\mathbb{R},d\mu_\nu)$. In particular, it satisfies the Plancherel formula i.e., 
\begin{eqnarray*}
\|\mathcal{F}_{\nu} (f)\|_{L^{2}(\mathbb{R},d\mu_\nu)}=\|f\|_{L^{2}(\mathbb{R},d\mu_\nu)}.
\end {eqnarray*}
\item [(iv)] If $f\in L^{1}(\mathbb{R},d\mu_\nu)$ with $\mathcal{F}_{\nu} (f)\in L^{1}(\mathbb{R},d\mu_\nu)$, then one has the inversion formula
\begin{eqnarray*}
f(x)=\int\limits_{\mathbb{R}} \mathcal{F}_{\nu} (f) (\lambda) E_{\nu}(i\lambda x) d\mu_\nu (\lambda)~a.e.~x\in\mathbb{R}.
\end{eqnarray*}
\end{itemize}
\end{thm}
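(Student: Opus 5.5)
The four properties of the Dunkl transform are standard, and I would derive each from a property of the Dunkl kernel $E_\nu$ together with the symmetry of $\mu_\nu$ under $x\mapsto -x$.

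For (i), the bound $|E_\nu(-i\lambda x)|\le 1$ for all real $\lambda,x$ (cited from \cite{ros1}) gives immediately
\[
|\mathcal{F}_\nu f(\lambda)|\le \int_{\mathbb{R}}|f(x)|\,d\mu_\nu(x)=\|f\|_{L^1(\mathbb{R},d\mu_\nu)}
\]
for every $\lambda$, and hence the $L^\infty$ estimate.

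For (ii), I would first show that $\Lambda_\nu$ is skew-symmetric with respect to $\mu_\nu$ on Schwartz functions:
\[
\int_{\mathbb{R}} \Lambda_\nu f\cdot g\,d\mu_\nu=-\int_{\mathbb{R}} f\cdot\Lambda_\nu g\,d\mu_\nu .
\]
The derivative part is handled by integration by parts against $|x|^{2\nu+1}$. This produces the term $-\int f g\,\frac{2\nu+1}{x}\,d\mu_\nu$ up to sign. The difference part, after the substitution $x\mapsto -x$, supplies exactly the compensating terms. I would then apply this identity with $g(x)=E_\nu(-i\lambda x)$, using $\Lambda_\nu E_\nu(-i\lambda\,\cdot)=-i\lambda E_\nu(-i\lambda\,\cdot)$. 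Since $g$ is bounded but not Schwartz, I would justify the step with a cutoff, or note that $g$ and its derivative have polynomial growth at most, so the boundary terms vanish.

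For (iii) and (iv), the plan follows the classical route. First I would compute the transform of Gaussians $e^{-a x^2}$ explicitly. Because $E_\nu(i\lambda x)=j_\nu(\lambda x)+\frac{i\lambda x}{2(\nu+1)}j_{\nu+1}(\lambda x)$ and the odd part integrates to zero against an even function, this reduces to the Hankel transform of a Gaussian (Weber's integral). The result is $(2a)^{-\nu-1}e^{-\lambda^2/4a}$. Using the product formula / Gaussian approximate identity, I would next prove inversion (iv) for $f,\mathcal{F}_\nu f\in L^1$. This mimics the Fourier proof: insert $e^{-\varepsilon\lambda^2}$, apply Fubini, recognize a Dunkl heat kernel $q_\varepsilon(x,y)$ that integrates to $1$, and let $\varepsilon\to0$ at Lebesgue points. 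Inversion together with the symmetry $\overline{E_\nu(i\lambda x)}=E_\nu(-i\lambda x)$ then gives Parseval on $\mathcal{S}(\mathbb{R})$. Since $\mathcal{F}_\nu$ maps $\mathcal{S}$ onto $\mathcal{S}$ (by (ii) and the analogous multiplication rule), density yields a unitary extension to $L^2$, which is (iii).

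The main obstacle is the approximate-identity step in (iv). It requires positivity and integrability of the Dunkl heat kernel, i.e.\ an explicit formula such as
\[
q_\varepsilon(x,y)=c\,\varepsilon^{-\nu-1}e^{-(x^2+y^2)/4\varepsilon}E_\nu\!\left(\frac{xy}{2\varepsilon}\right),
\]
together with the positivity of $E_\nu$ on the real axis. Rather than proving these facts, I would simply cite them from \cite{ros1,dejeu}.
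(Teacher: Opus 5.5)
There is no proof in the paper to compare against. The theorem is quoted as background from the cited reference, and only its consequences are used later. Your outline is a correct reconstruction of the standard argument.

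Part (i) is immediate from $|E_\nu(i\cdot)|\le 1$. Your skew-symmetry computation for (ii) checks out. The derivative part gives $-\int fg'\,d\mu_\nu-(2\nu+1)\int fg\,x^{-1}\,d\mu_\nu$, and the difference part gives $\frac{2\nu+1}{2}\int f(x)\big(g(x)+g(-x)\big)x^{-1}\,d\mu_\nu(x)$. These integrals make sense separately because $|x|^{2\nu}$ is locally integrable when $\nu>-\frac12$; for $\nu=-\frac12$ they are absent. Also, $(2a)^{-\nu-1}e^{-\lambda^2/4a}$ is the right Gaussian transform for this normalization of $\mu_\nu$.

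Two refinements:
\begin{enumerate}
\item Your claim $\mathcal{F}_\nu(\mathcal{S})=\mathcal{S}$ needs a further argument, because (ii) and the dual rule only control $\lambda^n\Lambda_\nu^m\mathcal{F}_\nu f$, i.e.\ Dunkl rather than ordinary derivatives. You can bypass it. Taking $k>\nu+1$ in $(1+\lambda^2)^k\mathcal{F}_\nu f=\mathcal{F}_\nu\big((I-\Lambda_\nu^2)^kf\big)$ puts $\mathcal{F}_\nu f$ in $L^1\cap L^2(\mathbb{R},d\mu_\nu)$, which is all that Parseval on $\mathcal{S}(\mathbb{R})$ needs. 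Then (iv) gives $\mathcal{F}_\nu^2f=f(-\,\cdot)$, which yields surjectivity of the $L^2$ extension.
\item The heat kernel $q_\varepsilon(x,\cdot)$ is not a convolution kernel, and for $\nu>-\frac12$ it carries a small secondary bump near $y=-x$, of mass $O(\varepsilon/x^2)$. Rather than a Lebesgue-point argument, it is cleaner to prove convergence in $L^1(\mathbb{R},d\mu_\nu)$. Use positivity, unit mass, the symmetry $q_\varepsilon(x,y)=q_\varepsilon(y,x)$ and density of $C_c(\mathbb{R})$, then pass to an a.e.\ convergent subsequence.
\end{enumerate}

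In rank one there is also a shorter route that avoids the heat kernel. Since $E_\nu(-i\lambda x)=j_\nu(\lambda x)-\frac{i\lambda x}{2(\nu+1)}j_{\nu+1}(\lambda x)$, splitting $f=f_e+f_o$ writes $\mathcal{F}_\nu f$ as the Hankel transform of order $\nu$ of $f_e$ plus a Hankel-type transform of order $\nu+1$ of $f_o$. For instance, $x^2\,d\mu_\nu(x)=2(\nu+1)\,d\mu_{\nu+1}(x)$ turns the odd part into $-i\lambda\int_{\mathbb{R}}x^{-1}f_o(x)\,j_{\nu+1}(\lambda x)\,d\mu_{\nu+1}(x)$. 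Then (iii) and (iv) follow from the classical Plancherel and inversion theorems for Hankel transforms. That is the economical proof on $\mathbb{R}$. Your route costs more, since it needs the explicit heat kernel and the positivity of $E_\nu$ on the real axis, but it is the one that extends to general reflection groups on $\mathbb{R}^N$.
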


Next, we consider the signed measure $\sigma_{x,y}$ on $\mathbb{R}$ given by 
\begin{eqnarray*}
d\sigma_{x,y}(z) &=\left\{
\begin{array}{l l }
W_\nu(x,y,z)d\mu_\nu(z),& \quad \text{if~$x,y\in\mathbb{R}\setminus \{0\}$,}\\
d\delta_x(z), & \quad \text{if~$y=0$,}\\
d\delta_y(z), & \quad \text{if~$ x=0$,}
\end{array}\right.
\end{eqnarray*}
where $W_\nu$ is an even function and it satisfies the following properties (see \cite{ros1}):
\begin{eqnarray*}
W_\nu(x,y,z) &=& W_{\nu} (y,x,z)=W_{\nu}(-z,y,-x)=W_{\nu} (-x,z,y),\\
&&\int\limits_{\mathbb{R}} |W_{\nu}(x,y,z)|d\mu_\nu(z)\leq 4.
\end{eqnarray*}
Also 
\begin{eqnarray*}
supp~(\sigma_{x,y})= S_{x,y}\cup (-S_{x,y})~with~S_{x,y}=[||x|-|y||,|x|+|y|].
\end{eqnarray*}

Let $x,y\in\mathbb{R}$ and $f$ be a continuous function on $\mathbb{R}$. Then the Dunkl translation operator of $f$ is given by 
\begin{eqnarray*}
\tau_x^\nu f(y)=\int\limits_{\mathbb{R}} f(z)d\sigma_{x,y}(z).
\end{eqnarray*}
The Dunkl translation operator $\tau^\nu_x$ satisfies the following properties:
\begin{prop} [\cite{mam}]
\leavevmode
\begin{itemize}
    \item [(i)] $\tau_x^\nu,~x\in\mathbb{R}$ is a bounded linear operator on $C^{\infty}(\mathbb{R})$.
    \item [(ii)] For all $f\in C^{\infty}(\mathbb{R})$ and $x,y\in\mathbb{R}$, one has 
    \begin{eqnarray*}
    \tau_x^{\nu} f(y)=\tau_y^{\nu} f(x),~\tau_0^{\nu} f(x)=f(x), \tau_x^\nu\circ\tau_y^\nu= \tau_y^\nu\circ\tau_x^\nu.
    \end{eqnarray*}
\end{itemize}
\end{prop}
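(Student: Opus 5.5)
The plan is to derive all properties from the integral representation $\tau_x^\nu f(y)=\int_{\mathbb{R}} f(z)\,d\sigma_{x,y}(z)$, the listed symmetries of $W_\nu$, the bound $\int|W_\nu(x,y,z)|\,d\mu_\nu(z)\le 4$, and the support property $\operatorname{supp}\sigma_{x,y}=S_{x,y}\cup(-S_{x,y})$. I take the cheap identities first, then commutativity, then boundedness.

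\emph{The identities in (ii).} Linearity of $\tau_x^\nu$ is immediate from linearity of the integral. The identity $\tau_0^\nu f(x)=f(x)$ is the definition: for $x=0$ the measure $\sigma_{0,y}$ is $\delta_y$, so $\tau_0^\nu f(y)=f(y)$. For $\tau_x^\nu f(y)=\tau_y^\nu f(x)$ I split into cases. If $x,y\neq 0$, it follows from $W_\nu(x,y,z)=W_\nu(y,x,z)$, since then $d\sigma_{x,y}=d\sigma_{y,x}$. If one of $x,y$ is zero, both sides reduce to point evaluations at the nonzero variable, so they agree.

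\emph{Commutativity.} For $\tau_x^\nu\circ\tau_y^\nu=\tau_y^\nu\circ\tau_x^\nu$ I would use the product formula
\[
\tau_x^\nu\big(E_\nu(i\lambda\,\cdot)\big)(y)=E_\nu(i\lambda x)E_\nu(i\lambda y).
\]
Together with the inversion theorem, it gives $\mathcal{F}_\nu(\tau_x^\nu f)(\lambda)=E_\nu(i\lambda x)\mathcal{F}_\nu f(\lambda)$ for nice $f$, e.g.\ $f\in\mathcal{S}(\mathbb{R})$ or $C_c^\infty$. Applying this twice, both compositions have Dunkl transform $E_\nu(i\lambda x)E_\nu(i\lambda y)\mathcal{F}_\nu f(\lambda)$. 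Injectivity of $\mathcal{F}_\nu$ then yields commutativity on $\mathcal{S}(\mathbb{R})$.

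To pass to general $f\in C^\infty(\mathbb{R})$, I use locality. Since $\sigma_{x,y}$ is supported in $[-(|x|+|y|),\,|x|+|y|]$, the value $\tau_x^\nu f(y)$ depends only on $f$ on that interval. Hence for fixed $x,y,w$ the value $\tau_x^\nu\tau_y^\nu f(w)$ depends only on $f$ on a bounded interval of radius $|x|+|y|+|w|$. Replacing $f$ by $\chi f$, with $\chi\in C_c^\infty$ equal to $1$ on a large interval, reduces the claim to compactly supported smooth functions.

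\emph{Boundedness in (i).} The estimate
\[
|\tau_x^\nu f(y)|\le 4\sup_{|z|\le|x|+|y|}|f(z)|
\]
gives continuity for the seminorm of uniform convergence on compacts. Controlling derivatives in $y$ is the main obstacle: the kernel $W_\nu$ is only piecewise smooth, so differentiating under $\int f(z)W_\nu(x,y,z)\,d\mu_\nu(z)$ is delicate. Instead I would switch to the equivalent Bochner-type representation, for $\nu>-\tfrac12$:
\[
\tau_x^\nu f(y)=c_\nu\int_0^\pi\Big[f_e\big(\langle x,y\rangle_\theta\big)+f_o\big(\langle x,y\rangle_\theta\big)\tfrac{x+y}{\langle x,y\rangle_\theta}\Big](1-\cos\theta)\sin^{2\nu}\theta\,d\theta
\]
plus the analogous symmetric term, where $\langle x,y\rangle_\theta=\sqrt{x^2+y^2-2xy\cos\theta}$. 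Here $f_e$ is even, so $f_e(\sqrt{t})$ is smooth in $t$, and $f_o(s)/s$ is smooth and even. The integrand is therefore smooth in $(x,y)$ and can be differentiated under the integral sign.

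This yields estimates
\[
\sup_{|y|\le R}|\partial_y^k\tau_x^\nu f(y)|\le C_{k,R,x}\sum_{j\le k}\sup_{|z|\le |x|+R}|f^{(j)}(z)|,
\]
which is exactly continuity, i.e.\ boundedness, of $\tau_x^\nu$ on the Fréchet space $C^\infty(\mathbb{R})$. The case $\nu=-\tfrac12$ is ordinary translation and needs no argument.
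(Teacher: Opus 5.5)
The paper gives no proof of this proposition; it is quoted from \cite{mam}, so there is no argument in the text to compare yours against. In the Dunkl literature on $\mathbb{R}$ this result is usually obtained from the intertwining operator $V_\nu$. That operator is a topological automorphism of $C^\infty(\mathbb{R})$ with $V_\nu e^{i\lambda\cdot}=E_\nu(i\lambda\cdot)$, and one uses the representation
\[
\tau_x^\nu f(y)=V_\nu^{(x)}V_\nu^{(y)}\big[(V_\nu^{-1}f)(x+y)\big],
\]
which is checked against the $\sigma_{x,y}$-definition on $E_\nu(i\lambda\cdot)$ via the product formula. In this form the following are nearly immediate: continuity on $C^\infty(\mathbb{R})$, the symmetry, $\tau_0^\nu=\mathrm{id}$, and commutativity (both compositions equal $V_\nu^{(x)}V_\nu^{(y)}V_\nu^{(w)}[(V_\nu^{-1}f)(x+y+w)]$). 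The real work sits in the mapping properties of $V_\nu$.

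Your route instead works directly with the measure definition the paper actually uses:
\begin{itemize}
\item symmetry and $\tau_0^\nu=\mathrm{id}$ from $W_\nu(x,y,z)=W_\nu(y,x,z)$ and the delta cases;
\item commutativity from Dunkl-transform uniqueness on $\mathcal{S}(\mathbb{R})$, plus the locality coming from $\operatorname{supp}\sigma_{x,y}$;
\item smoothness from the explicit Bochner-type formula together with $f_e(s)=g(s^2)$ and $f_o(s)/s=k(s^2)$.
\end{itemize}
This is correct and more elementary. It trades the theory of $V_\nu$ for R\"osler's explicit formula and Whitney's lemma on even functions.

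There are a few things to tidy up.

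\emph{The explicit formula.} Take $\langle x,y\rangle_\theta=\sqrt{x^2+y^2-2xy\cos\theta}$ (no absolute value) and $c_\nu=\Gamma(\nu+1)/(\sqrt{\pi}\,\Gamma(\nu+\frac12))$. Then your first integral is already all of $\tau_x^\nu f(y)$: it returns $1$ for $f\equiv 1$, $x+y$ for $f(z)=z$, and $f(x)$ at $y=0$. There is no extra symmetric term, and adding one with the same constant would double the operator.

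\emph{The derivative count.} The bound does not come out with $j\le k$. Each derivative of $g$ costs two derivatives of $f_e$: one has $g^{(m)}(s^2)=2^{-m}(D^m f_e)(s)$ with $Dh(s)=h'(s)/s=\int_0^1 h''(vs)\,dv$. So you get $j\le 2k+1$. This is harmless, since continuity on the Fr\'echet space $C^\infty(\mathbb{R})$ only needs some finite order.

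\emph{Commutativity on $\mathcal{S}(\mathbb{R})$.} You can avoid injectivity and the a.e.-to-pointwise step altogether. Inversion, the product formula and Fubini (using $|\sigma_{x,w}|(\mathbb{R})\le 4$) give directly
\[
\tau_x^\nu\tau_y^\nu f(w)=\int_{\mathbb{R}}\mathcal{F}_\nu f(\lambda)E_\nu(i\lambda x)E_\nu(i\lambda y)E_\nu(i\lambda w)\,d\mu_\nu(\lambda),
\]
which is visibly symmetric in $x$ and $y$.

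\emph{Order of the argument.} The locality reduction from $C^\infty$ to $C_c^\infty$ needs $\tau_y^\nu f$ to be continuous, so that $\tau_x^\nu$ can be applied to it through $\sigma_{x,w}$. That is supplied by your part (i), which does not use commutativity, so prove (i) first.
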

If $f,g\in L^2(\mathbb{R}, d\mu_\nu)$, then
\begin{eqnarray*}
    \int\limits_{\mathbb{R}}\tau^\nu_x f(y)g(y)d\mu_\nu(y)=\int\limits_{\mathbb{R}} f(y)\tau^\nu_{-x}g(y)d\mu_\nu(y),~\forall x\in \mathbb{R}.
\end{eqnarray*}

If $f,g$ are continuous functions on $\mathbb{R}$ with compact support, then the generalized convolution of $f$ and $g$, denoted by $f\ast_{\nu} g$, is defined by
\begin{eqnarray*}
f\ast_{\nu} g (x)=\int\limits_{\mathbb{R}} f(y)\tau_{y}^{\nu} g(x)d\mu_{\nu} (y).
\end{eqnarray*}
The generalized convolution $\ast_{\nu}$ is associative and commutative \cite{ros1}. The following results are known.
\begin{thm}[\cite{sol}] \label{7pth3}
\leavevmode
\begin{itemize}
\item [(i)] For all $x\in\mathbb{R}$, the generalized translation operator $\tau_x^\nu$ extends to $L^{p}(\mathbb{R}, d\mu_\nu),p\geq 1$ and for every $f\in L^{p}(\mathbb{R}, d\mu_\nu)$, one has 
\begin{eqnarray*}
\|\tau_x^\nu f\|_{L^{p}(\mathbb{R},d\mu_\nu)}\leq 4\|f\|_{L^{p}(\mathbb{R},d\mu_\nu)}.
\end{eqnarray*}
\item [(ii)] If $f\in L^{1}(\mathbb{R}, d\mu_\nu)$, then 
\begin{eqnarray*}
\mathcal{F}_{\nu} (\tau_x^\nu f)(\lambda)=E_{\nu}(i\lambda x)\mathcal{F}_{\nu}(f)(\lambda),~x,\lambda\in\mathbb{R}.
\end{eqnarray*}
\item [(iii)] If  $f\in L^{1}(\mathbb{R}, d\mu_\nu)$ and $g\in L^{2}(\mathbb{R}, d\mu_\nu)$, then 
\begin{eqnarray*}
\mathcal{F}_{\nu}(f\ast_\nu g)=\mathcal {F}_{\nu} (f)\mathcal{F}_{\nu} (g).
\end{eqnarray*}
\end{itemize}
\end{thm}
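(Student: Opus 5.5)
We prove the three parts in order, each time first on a dense class of nice functions and then extending by density. The main tools are the integral representation $\tau_x^\nu f(y)=\int f(z)\,d\sigma_{x,y}(z)$ with the bound $\int|W_\nu(x,y,z)|\,d\mu_\nu(z)\le 4$, the symmetries of $W_\nu$, and the properties of $\mathcal{F}_\nu$ in the preceding theorem.

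\emph{Part (i).} Take $f\in C_c(\mathbb{R})$ and $1\le p<\infty$. For fixed $x,y\neq0$, view $|W_\nu(x,y,z)|\,d\mu_\nu(z)$ as a measure of total mass at most $4$. Writing $|W_\nu|=|W_\nu|^{1/p'}|W_\nu|^{1/p}$ and applying H\"older gives
\begin{eqnarray*}
|\tau_x^\nu f(y)|^p\le 4^{p-1}\int_{\mathbb{R}}|f(z)|^p|W_\nu(x,y,z)|\,d\mu_\nu(z).
\end{eqnarray*}
Integrating in $y$ against $d\mu_\nu$ and using Fubini, we need
\begin{eqnarray*}
\int_{\mathbb{R}}|W_\nu(x,y,z)|\,d\mu_\nu(y)\le 4.
\end{eqnarray*}
This follows from the symmetry $W_\nu(x,y,z)=W_\nu(-x,z,y)$, which turns it into the known mass bound with the roles of $y$ and $z$ exchanged. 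The result is $\|\tau_x^\nu f\|_p^p\le 4^p\|f\|_p^p$. The degenerate cases $x=0$ or $y=0$ involve Dirac masses and are trivial. The case $p=\infty$ is immediate from the mass bound. Density of $C_c$ in $L^p(\mathbb{R},d\mu_\nu)$ then gives the extension, with the same constant.

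\emph{Part (ii).} For $f\in C_c$, Fubini gives
\begin{eqnarray*}
\mathcal{F}_\nu(\tau_x^\nu f)(\lambda)=\int f(z)\left(\int E_\nu(-i\lambda y)W_\nu(x,y,z)\,d\mu_\nu(y)\right)d\mu_\nu(z).
\end{eqnarray*}
The key input is the product formula: $\tau_x^\nu$ applied to $E_\nu(-i\lambda\,\cdot)$ equals $E_\nu(i\lambda x)E_\nu(-i\lambda\,\cdot)$, up to the sign conventions of R\"osler's kernel, which I will track via $W_\nu(x,y,z)=W_\nu(-z,y,-x)$. Feeding this product formula into the inner integral yields $E_\nu(i\lambda x)\mathcal{F}_\nu f(\lambda)$. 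For general $f\in L^1$, approximate by $C_c$ functions and use part (i) with $p=1$ together with $\|\mathcal{F}_\nu g\|_\infty\le\|g\|_1$.

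\emph{Part (iii).} First take $f,g\in C_c$. Then $f\ast_\nu g=\int f(y)\,\tau_y^\nu g\,d\mu_\nu(y)$ is a Bochner integral in $L^1$, so (ii) together with Fubini gives $\mathcal{F}_\nu(f\ast_\nu g)=\mathcal{F}_\nu f\,\mathcal{F}_\nu g$. Next, for $f\in L^1$ and $g\in L^2$, Minkowski's inequality and part (i) give $\|f\ast_\nu g\|_2\le 4\|f\|_1\|g\|_2$. This lets us extend by density, using Plancherel on the left-hand side and $\|\mathcal{F}_\nu f\|_\infty\le\|f\|_1$ on the right-hand side.

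\emph{Main obstacle.} The hardest step is (ii): justifying the product formula and keeping track of the sign and reflection conventions in $W_\nu$. I would import this formula directly from R\"osler's explicit kernel rather than rederive it.
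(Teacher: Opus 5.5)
The paper gives no proof of this theorem; it is quoted from \cite{sol}, so your argument has to stand on its own. Parts (i) and (ii) essentially do. In (i), the H\"older/Schur step together with $\int|W_\nu(x,y,z)|\,d\mu_\nu(y)=\int|W_\nu(-x,z,y)|\,d\mu_\nu(y)\le 4$ is correct. In (ii), the same symmetry turns the inner integral into $\tau_{-x}^\nu\bigl(E_\nu(-i\lambda\,\cdot)\bigr)(z)=E_\nu(i\lambda x)E_\nu(-i\lambda z)$, so the conclusion of (ii) is right. Note, however, that it is $\tau_{-x}^\nu$, not $\tau_x^\nu$, that occurs. The identity you wrote has the wrong sign: the product formula gives $\tau_x^\nu E_\nu(-i\lambda\,\cdot)=E_\nu(-i\lambda x)E_\nu(-i\lambda\,\cdot)$.

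The genuine gap is in (iii), where the sign you postponed actually matters. With the convolution as defined in the paper, $f\ast_\nu g(x)=\int f(y)\,\tau_y^\nu g(x)\,d\mu_\nu(y)$, your step ``(ii) together with Fubini'' yields
\[
\mathcal{F}_\nu(f\ast_\nu g)(\lambda)=\mathcal{F}_\nu(g)(\lambda)\int_{\mathbb{R}}f(y)E_\nu(i\lambda y)\,d\mu_\nu(y)=\mathcal{F}_\nu(f)(-\lambda)\,\mathcal{F}_\nu(g)(\lambda).
\]
This agrees with $\mathcal{F}_\nu(f)(\lambda)\mathcal{F}_\nu(g)(\lambda)$ for all $g$ only when $f$ is even, because $E_\nu(i\lambda y)$ has the nonzero odd part $\frac{i\lambda y}{2(\nu+1)}j_{\nu+1}(\lambda y)$. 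For $\nu=-\frac12$ this is transparent: $\tau_y g(x)=g(x+y)$, so the paper's $f\ast_\nu g$ is the cross-correlation $(2\pi)^{-1/2}\int f(y)g(x+y)\,dy$, whose Fourier transform is $\mathcal{F}f(-\lambda)\mathcal{F}g(\lambda)$. So (iii) cannot be proved for the convolution as literally written.

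It is true for the usual Dunkl convolution,
\[
f\ast_\nu g(x)=\int f(y)\,\tau_x^\nu g(-y)\,d\mu_\nu(y)=\int f(y)\,\tau_{-y}^\nu g(x)\,d\mu_\nu(y).
\]
With that definition your proof goes through unchanged:
\begin{itemize}
\item applying (ii) to $\tau_{-y}^\nu g$ produces the factor $E_\nu(-i\lambda y)$, hence $\mathcal{F}_\nu(f)(\lambda)\mathcal{F}_\nu(g)(\lambda)$ for $f,g\in C_c$;
\item your bound $\|f\ast_\nu g\|_{L^2}\le 4\|f\|_{L^1}\|g\|_{L^2}$, together with Plancherel and $\|\mathcal{F}_\nu f\|_\infty\le\|f\|_1$, gives the extension by density.
\end{itemize}
You need to make this reflection explicit rather than leave it under ``sign conventions''.
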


In the sequel, we recall some definitions and known results that will be helpful throughout the paper. We begin with some important definitions from \cite{guliyev2009fractional}.

Let $0<\gamma<d_{\nu}$, where $d_{\nu}=2\nu+2$. For $f\in L_{loc}^1(\mathbb{R},d\mu_\nu)$ the Dunkl Hardy-Littlewood maximal function $M^\nu$ is defined as follows:
\begin{eqnarray*}
    M^\nu f(x)= \sup_{r>0}\frac{1}{\mu_\nu (B(0,r))}\int\limits_{B(0,r)}\tau^\nu_x|f|(y)d\mu_\nu(y).
\end{eqnarray*}
Now for $f\in L_{loc}^1(\mathbb{R},d\mu_\nu)$, the Dunkl fractional maximal function $M^\nu_\gamma$ is defined as follows:
\begin{eqnarray*}
    M^\nu_\gamma f(x)= \sup_{r>0}\frac{1}{\mu_\nu (B(0,r))^{1-\frac{\gamma}{d_\nu}}}\int\limits_{B(0,r)}\tau^\nu_x|f|(y)d\mu_\nu(y).
\end{eqnarray*}
Note that if $\gamma=0$, then $M^\nu_0$ coincides with the Dunkl Hardy-Littlewood maximal function $M^\nu$.\\
For every $f\in L^{p}_{loc}(\mathbb{R})$, $1\leq p<\infty$, the Dunkl fractional integral operator $I_{\gamma}^{\nu}$ is defined as follows:
\begin{eqnarray}
I_{\gamma}^{\nu}f(x)=f\ast_{\nu} K_{\gamma}^{\nu}(x)
=\int\limits_{\mathbb{R}} f(y)\tau_{y}^{\nu}K_{\gamma}^{\nu}(x)d\mu_{\nu}(y), \label{eq2.1}
\end{eqnarray}
where $K_{\gamma}^{\nu}(x)=|x|^{\gamma-d_{\nu}}$. The kernel $K_{\gamma}^{\nu}$ is known as the Dunkl Riesz kernel.

Let $B(x,r)=\{y\in\mathbb{R}: |y|\in ]\max \{0,|x|-r\}, |x|+r[\},~r>0$ and $b_\nu=[2^{\nu+1} (\nu+1)\Gamma (\nu+1)] ^{-1}$. Then $B(0,r)=]-r,r[$ and $\mu_\nu B(0,r)=b_\nu r^{2\nu+2}$.
\begin{defn}[\cite{sps}]
For $1\leq p\leq q< \infty$, the Dunkl Morrey space, denoted by $L^{p,q}(\mathbb{R},d\mu_\nu)$, is defined to be the set of all locally integrable functions $f$ on $\mathbb{R}$ such that 
\begin{eqnarray*}
\|f\|_{L^{p,q}(\mathbb{R},d\mu_\nu)}:=\sup\limits_{x\in\mathbb{R},r>0} r^{d_\nu(\frac{1}{q}-\frac{1}{p})}\bigg(\int\limits_{B(0,r)}\tau_{x}^{\nu}|f|^p(y)d\mu_{\nu}(y)\bigg)^{\frac{1}{p}}<\infty. 
\end{eqnarray*}
\end{defn}

\begin{lem}[Young's inequality {\cite{tsx}}] \label{lem3}
    Let $G=\mathbb{Z}_2$. Let $p,r,\xi\ge 1$ with $\frac{1}{r}+1=\frac{1}{p}+\frac{1}{\xi}$. Assume $f\in L^p(\mathbb{R},d\mu_\nu)$ and $g\in L^\xi(\mathbb{R},d\mu_\nu)$ respectively. Then
    \begin{eqnarray*}
\|f\ast_{\nu}g\|_{L^{r}(\mathbb{R},d\mu_\nu)}\leq C \|f\|_{L^{p}(\mathbb{R},d\mu_\nu)}\|g\|_{L^{\xi}(\mathbb{R},d\mu_\nu)}.
\end{eqnarray*}
\end{lem}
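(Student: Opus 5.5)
We want Young's inequality for the Dunkl convolution on $\mathbb{R}$: $\|f\ast_\nu g\|_{L^r}\le C\|f\|_{L^p}\|g\|_{L^\xi}$ whenever $\frac1r+1=\frac1p+\frac1\xi$. The plan is to copy the classical proof, with two substitutes: Theorem~\ref{7pth3}(i), $\|\tau^\nu_y h\|_{L^s(\mathbb{R},d\mu_\nu)}\le 4\|h\|_{L^s(\mathbb{R},d\mu_\nu)}$, replaces translation invariance, and the integrability bound $\int_{\mathbb{R}}|W_\nu(x,y,z)|\,d\mu_\nu(z)\le 4$ handles the fact that $\sigma_{x,y}$ is only a signed measure. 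First I will record the pointwise bound $|\tau^\nu_y g(x)|\le \tau^\nu_y|g|(x)$ in the weak sense $|\tau^\nu_y g(x)|\le\int|g(z)|\,|d\sigma_{x,y}|(z)$. I therefore introduce the positive operator
\[
T_y h(x)=\int_{\mathbb{R}} h(z)\,|W_\nu(x,y,z)|\,d\mu_\nu(z)
\]
for $x,y\neq0$, and set $T_yh=h$ when $y=0$.

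Next I show that $T_y$ is bounded on $L^s(d\mu_\nu)$ for every $1\le s\le\infty$ with norm at most $4$, uniformly in $y$, and that it also satisfies the ``dual'' uniform bound $\sup_x\int |T_y h(x)|\,d\mu_\nu(y)$-type estimates. Both follow from the symmetries $W_\nu(x,y,z)=W_\nu(y,x,z)=W_\nu(-x,z,y)$ together with $\int|W_\nu|\,d\mu_\nu\le4$ in each variable. The $L^\infty$ bound is immediate. For the $L^1$ bound, integrate in $x$ first, using the symmetry to move $x$ into the third slot. Riesz--Thorin (or Schur's test) then gives all $s$.

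The core step is the classical Hölder splitting. For fixed $x$, write
\[
|f(y)|\,T_y|g|(x)\le \int |f(y)|\,|g(z)|\,|W_\nu(x,y,z)|\,d\mu_\nu(z),
\]
so that
\[
|f\ast_\nu g(x)|\le \iint |f(y)||g(z)|\,|W_\nu(x,y,z)|\,d\mu_\nu(z)\,d\mu_\nu(y).
\]
This is the integral of $F(y,z)=|f(y)||g(z)|$ against the positive kernel $K_x(y,z)=|W_\nu(x,y,z)|$. The kernel has uniformly bounded marginals in each variable, when any two of $x,y,z$ are fixed and we integrate over the third. I then factor
\[
|f(y)||g(z)|=\big(|f|^p|g|^\xi\big)^{1/r}\,|f(y)|^{p(\frac1p-\frac1r)}\,|g(z)|^{\xi(\frac1\xi-\frac1r)}
\]
and apply the three-term Hölder inequality with exponents $r$, $\frac{p\xi}{\xi-p}\cdot$(appropriate), that is $\big(\frac1p-\frac1r\big)^{-1}$ and $\big(\frac1\xi-\frac1r\big)^{-1}$, all with respect to the measure $K_x\,d\mu_\nu(y)\,d\mu_\nu(z)$. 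The two "remainder" factors are each controlled by the marginal bound $\le4$ times $\|f\|_p^{p}$ or $\|g\|_\xi^{\xi}$. Taking the $r$-th power and integrating in $x$, the main factor $\iiint |f(y)|^p|g(z)|^\xi K_x\,d\mu_\nu(x)\,d\mu_\nu(y)\,d\mu_\nu(z)$ is bounded by $4\|f\|_p^p\|g\|_\xi^\xi$, again by the marginal bound in $x$. Collecting exponents yields the inequality with $C$ a power of $4$.

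Alternatively, one could avoid the kernel bookkeeping and use Minkowski's integral inequality for the endpoint $\xi=1$ or $p=1$, and $L^p\times L^{p'}\to L^\infty$ via duality. Bilinear (multilinear Riesz--Thorin) interpolation would then fill in the full range. That route needs only Theorem~\ref{7pth3}(i) and the adjoint identity $\int\tau^\nu_x f\,g=\int f\,\tau^\nu_{-x}g$.

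The main obstacle is justifying the marginal bound $\int |W_\nu(x,y,z)|\,d\mu_\nu(x)\le4$ in each variable, since only the bound in $z$ is listed. I would derive it from the symmetry relations, which permute the variables up to sign, combined with the evenness of $W_\nu$ and the reflection invariance of $\mu_\nu$. The degenerate cases $x=0$ or $y=0$, where $\sigma$ is a Dirac mass, form a $\mu_\nu$-null set and can be ignored.
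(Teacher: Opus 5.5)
The paper gives no proof of this lemma; it is imported from \cite{tsx}. So there is no internal argument to compare against, and your proof has to stand on its own. It does.

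\textbf{Main route.} The marginal bounds you single out as the main obstacle each follow in one line from the listed symmetries. The identity $W_\nu(x,y,z)=W_\nu(-x,z,y)$ turns $\int|W_\nu(x,y,z)|\,d\mu_\nu(y)$ into the given $z$-marginal with first arguments $(-x,z)$. Likewise $W_\nu(x,y,z)=W_\nu(y,x,z)=W_\nu(-y,z,x)$ handles the $x$-integral; evenness of $W_\nu$ is not needed. With all three marginals bounded by $4$, your three-factor H\"older splitting against $|W_\nu(x,y,z)|\,d\mu_\nu(y)\,d\mu_\nu(z)$, followed by Tonelli in $x$, is exactly right. The powers of $4$ combine to $4^{1/r+(1/p-1/r)+(1/\xi-1/r)}=4$, so $C=4$.

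\textbf{One slip.} The inequality $|\tau^\nu_y g(x)|\le\tau^\nu_y|g|(x)$, as you first state it, is false. For $\nu>-\frac12$ the kernel $W_\nu$ genuinely changes sign (that is why $\sigma_{x,y}$ is only a signed measure), so Dunkl translation does not preserve positivity. Only the total-variation bound $|\tau^\nu_y g(x)|\le\int|g(z)|\,|W_\nu(x,y,z)|\,d\mu_\nu(z)$ holds. Fortunately that is the only form you actually use.

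\textbf{Comparing your two routes.} The interpolation route runs as follows: Minkowski plus Theorem~\ref{7pth3}(i) at $p=1$; then H\"older, $\tau^\nu_y g(x)=\tau^\nu_x g(y)$ and Theorem~\ref{7pth3}(i) at $p=\xi'$; then ordinary Riesz--Thorin in $f$ with $g$ fixed (no multilinear interpolation is needed). This is the soft, standard argument and uses only the $L^p$-boundedness of translations. However, it silently needs two things for $g\in L^\xi$: the symmetry $\tau^\nu_y g(x)=\tau^\nu_x g(y)$, and a jointly measurable version of $(x,y)\mapsto\tau^\nu_y g(x)$. The paper records that symmetry only for smooth functions.

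The kernel route avoids this. Your Schur-test bound for the operator with kernel $|W_\nu|$ identifies the integral formula with the $L^\xi$-extension of $\tau^\nu_y$. Tonelli applied to your majorant then shows the double integral defining $f\ast_\nu g$ converges absolutely for $\mu_\nu$-a.e.\ $x$. That is what gives $f\ast_\nu g$ a meaning for $f\in L^p$, $g\in L^\xi$ in the first place, since the paper defines $\ast_\nu$ only for continuous compactly supported functions. It also yields the explicit constant $4$.
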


The following result has been proved on $\mathbb{R}^n$ in \cite{gorba}, which we will state here on $\mathbb{R}$.

\begin{lem}\label{7plem1}
The kernel $\Phi(x,y)=\tau_{x}^{\nu} K_{\gamma}^{\nu}(y)$ satisfies the following properties:
\begin{itemize}
    \item [(i)] $\Phi(x,y)=\Phi(y,x)$.
    \item [(ii)] $\Phi(rx,ty)=r^{\gamma-d_{\nu}}\Phi(x,\frac{ty}{r})$.
    \item [(iii)] $\Phi(x,y)=\int\limits_{\mathbb{R}}(x^{2}+y^{2}-2 x\eta)^{\frac{\gamma-d_{\nu}}{2}}d\mu_{y}^{\nu}(\eta)$,
\end{itemize}
where for each y in $\mathbb{R}$, $d\mu_{y}^{\nu}$ is a probability measure on $\mathbb{R}$, whose support is contained in $[-y,y]$.
\end{lem}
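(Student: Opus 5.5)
The plan is to reduce everything to Rösler's explicit formula for the Dunkl translation of \emph{radial} (even) functions on $\mathbb{R}$, and then read off (i)--(iii) from that formula. Recall that for $\nu>-\frac12$ and an even continuous function $f(z)=g(|z|)$ one has, for $x,y\in\mathbb{R}$,
\begin{equation*}
\tau_x^\nu f(y)=\int_{-1}^{1} g\Big(\sqrt{x^2+y^2-2xys}\Big)\,dm_\nu(s),\qquad dm_\nu(s)=c_\nu(1+s)(1-s^2)^{\nu-\frac12}\,ds,
\end{equation*}
where $c_\nu$ normalises $m_\nu$ to a probability measure on $[-1,1]$. For $\nu=-\frac12$, $m_\nu$ is replaced by $\delta_1$. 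This formula follows from the description of $\sigma_{x,y}$ in Section~\ref{sec:2}: for even $f$ only the even part of $W_\nu$ contributes, and the change of variables $|z|=\sqrt{x^2+y^2-2xys}$ maps $S_{x,y}$ onto $[-1,1]$. It also follows from the product formula for $E_\nu$. The key point is that for radial $f$ the resulting measure is \emph{positive}, even though $\sigma_{x,y}$ is only signed.

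\textbf{Step 1: extend to the singular kernel.} $K_\gamma^\nu(z)=|z|^{\gamma-d_\nu}$ is neither continuous nor bounded, so I first apply the formula to the truncations $K_N(z)=\min\{|z|^{\gamma-d_\nu},N\}$. These are even, continuous and increase to $K_\gamma^\nu$. Because $m_\nu\ge 0$, the monotone convergence theorem passes the formula to the limit. This defines $\Phi(x,y)=\tau_x^\nu K_\gamma^\nu(y)$ as a value in $[0,\infty]$, consistent with the $L^p_{loc}$ extension of $\tau_x^\nu$ in Theorem~\ref{7pth3}(i). I expect this step to be the only delicate point, namely justifying that the extension of $\tau_x^\nu$ to $K_\gamma^\nu$ coincides with this monotone limit. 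I would argue by testing against compactly supported functions and using the $L^p$ continuity on each piece $K_\gamma^\nu\chi_{B(0,R)}\in L^1_{loc}$.

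\textbf{Step 2: the three properties.} Substituting $\eta=ys$ in the formula gives
\begin{equation*}
\Phi(x,y)=\int_{\mathbb{R}}(x^2+y^2-2x\eta)^{\frac{\gamma-d_\nu}{2}}\,d\mu_y^\nu(\eta),
\end{equation*}
where $\mu_y^\nu$ is the push-forward of $m_\nu$ under $s\mapsto ys$. This is a probability measure supported in $[-|y|,|y|]$, which proves (iii); when $y=0$ it is $\delta_0$, matching $\sigma_{x,0}=\delta_x$. For (i), the integrand $x^2+y^2-2xys$ in the $s$-form is symmetric in $(x,y)$, so $\Phi(x,y)=\Phi(y,x)$ immediately. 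This is consistent with $\tau_x^\nu f(y)=\tau_y^\nu f(x)$ from the Proposition on $\tau_x^\nu$. For (ii), with $r>0$,
\begin{equation*}
(r^2x^2+t^2y^2-2rxtys)^{\frac{\gamma-d_\nu}{2}}=r^{\gamma-d_\nu}\Big(x^2+\big(\tfrac{ty}{r}\big)^2-2x\tfrac{ty}{r}s\Big)^{\frac{\gamma-d_\nu}{2}},
\end{equation*}
and integrating against $dm_\nu(s)$ gives $\Phi(rx,ty)=r^{\gamma-d_\nu}\Phi(x,\tfrac{ty}{r})$.

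Finally, I would note that $x^2+y^2-2xys\ge(|x|-|y|)^2\ge0$, so the integrand is well defined, with value $+\infty$ allowed only on the diagonal set $|x|=|y|$. All identities are therefore to be understood in $[0,\infty]$, which suffices for the later applications.
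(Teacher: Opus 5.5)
Your proposal is correct and is essentially the argument behind the result of Gorbachev et al.\ that the paper quotes without proof. That argument uses R\"osler's positive formula for Dunkl translates of radial functions, extended to $|z|^{\gamma-d_\nu}$ by monotone truncation; (i)--(iii) are then read off from the fact that the integrand depends only on $x^2+y^2-2xys$, together with the push-forward $\eta=ys$. One harmless slip: with the paper's normalisation ($\tau_x^\nu f(y)=f(x+y)$ when $\nu=-\frac12$), the weight paired with $\sqrt{x^2+y^2-2xys}$ is $c_\nu(1-s)(1-s^2)^{\nu-\frac12}$, so the limiting case is $\delta_{-1}$ rather than $\delta_1$; since your proofs of (i)--(iii) only use that $m_\nu$ is a probability measure on $[-1,1]$ independent of $x,y$, nothing changes.
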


\begin{lem}[\cite{abdelkefi2007dunkl,guliyev2010p}] \label {7plem2}
Support of $\tau_x^\nu\chi_{B(0,r)}$ is contained in the ball $B(x,r)$ for all $x\in\mathbb{R}$.  Moreover,
\begin{eqnarray*}
0\leq\tau_{x}^{\nu}\chi_{B(0,r)}(y)\leq min\left\{1,\frac{2C_\nu}{2\nu+1}\bigg(\frac{r}{|x|}\bigg)^{2\nu+1}\right\},~\forall~y\in B(x,r), 
\end{eqnarray*}
with $C_\nu=\frac{\Gamma(\nu+1)}{\sqrt{\pi}\Gamma(\nu+\frac{1}{2})}$. In fact, when $|x|\leq r$, one has $0\leq\tau_{x}^{\nu}\chi_{B(0,r)}(y)\leq 1$,
and when $|x|>r$, one has $0\leq \tau_{x}^{\nu}\chi_{B(0,r)}(y)\leq \frac{2C_\nu}{2\nu+1}\big(\frac{r}{|x|}\big)^{2\nu+1}$.
\end{lem}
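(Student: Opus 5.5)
The plan is to work directly with the integral representation of the translation $\tau_x^\nu$ on the real line, specialised to the even function $f=\chi_{B(0,r)}=\chi_{]-r,r[}$. The cases $x=0$ or $y=0$ are trivial: there $\sigma_{x,y}$ is a Dirac mass, so $\tau_x^\nu\chi_{B(0,r)}(y)$ equals $\chi_{B(0,r)}(y)$ or $\chi_{B(0,r)}(x)$, which lies in $[0,1]$ and is supported as claimed. So assume $x,y\neq 0$.

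\textbf{Step 1 (support).} I use $\operatorname{supp}\sigma_{x,y}=S_{x,y}\cup(-S_{x,y})$ with $S_{x,y}=[\,||x|-|y||,|x|+|y|\,]$. Since $\sigma_{x,y}=W_\nu\,d\mu_\nu$ is absolutely continuous, endpoints carry no mass. If $y\notin B(x,r)$, then $|y|\le |x|-r$ or $|y|\ge |x|+r$, and in both cases $||x|-|y||\ge r$. Hence $S_{x,y}\cup(-S_{x,y})$ meets $]-r,r[$ in at most the points $\pm r$, a $\mu_\nu$-null set. Therefore $\tau_x^\nu\chi_{B(0,r)}(y)=0$, which gives the support statement.

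\textbf{Step 2 ($0\le \tau\le 1$).} The signed kernel $W_\nu$ is not suited to positivity, so I switch to the rank-one version of Lemma~\ref{7plem1}(iii). For an even $f$ (Rösler's formula),
\[
\tau_x^\nu f(y)=C_\nu\int_{-1}^{1} f\Big(\sqrt{x^2+y^2-2xyt}\Big)(1+t)(1-t^2)^{\nu-\frac12}\,dt ,
\]
where $C_\nu(1+t)(1-t^2)^{\nu-1/2}\,dt$ is a probability measure on $[-1,1]$. With $f=\chi_{B(0,r)}$ the integrand lies in $[0,1]$, so $0\le\tau_x^\nu\chi_{B(0,r)}(y)\le 1$ immediately. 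This covers the case $|x|\le r$. (For $\nu=-\tfrac12$ the measure degenerates to a Dirac mass and everything is trivial.)

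\textbf{Step 3 (decay for $|x|>r$).} This is the main obstacle. Write $t=\cos\theta$, so that $\sqrt{x^2+y^2-2|x||y|\cos\theta}$ is the third side of a triangle with sides $|x|,|y|$ and included angle $\theta$. If $xy<0$, the substitution $t\mapsto -t$ turns the weight into $(1-t)(1-t^2)^{\nu-1/2}$, which is bounded by the same quantity, so this case is handled identically.

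The indicator is nonzero only when the third side is $<r$. I then use elementary geometry: the distance from the vertex at distance $|x|$ to the line spanned by the side of length $|y|$ equals $|x|\sin\theta$. This distance is at most the third side, which is $<r$. Since $|x|>r$, this forces both $\sin\theta<r/|x|$ and $\theta<\pi/2$. Hence the admissible set of $\theta$ is an interval $[0,\theta_0]$ with $\sin\theta_0\le r/|x|<1$.

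Changing variables, $dt=-\sin\theta\,d\theta$, so
\[
\tau_x^\nu\chi_{B(0,r)}(y)\le C_\nu\int_0^{\theta_0}(1+\cos\theta)\sin^{2\nu}\theta\,d\theta\le 2C_\nu\int_0^{\theta_0}\sin^{2\nu}\theta\cos\theta\cdot\frac{d\theta}{\cos\theta}.
\]
The extra factor $1/\cos\theta$ is the delicate point, so I avoid producing it. Instead I substitute $s=\sin\theta$ directly in the $t$-integral on the region $t\in[\cos\theta_0,1]$, with $1-t^2=s^2$ and $dt=s\,ds/\sqrt{1-s^2}$.

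If that loses too much, I will first try the cruder estimate $(1+t)\le 2$ together with $\int_{t_0}^1(1-t^2)^{\nu-1/2}dt$. I then bound $(1-t^2)^{\nu-1/2}$ on $[t_0,1]$ by its value in terms of $s\le r/|x|$, split according to the sign of $\nu-\tfrac12$. Carried out carefully, this should produce $\int_0^{r/|x|}2s^{2\nu}\,ds=\frac{2}{2\nu+1}(r/|x|)^{2\nu+1}$ up to the constant. The sign split of $\nu-\tfrac12$ and keeping the constant exactly $\frac{2C_\nu}{2\nu+1}$ is where I expect the real work to be. Combining Steps 1--3 gives the lemma.
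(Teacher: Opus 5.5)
Your Steps 1 and 2 are correct. The support claim follows from $\operatorname{supp}\sigma_{x,y}=S_{x,y}\cup(-S_{x,y})$ as you say, and the even-function form of R\"osler's formula, whose weight is a probability measure on $[-1,1]$, gives $0\le\tau_x^\nu\chi_{B(0,r)}\le 1$ at once.

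One sign slip is harmless. Your weight $(1+t)$ with argument $\sqrt{x^2+y^2-2xyt}$ reduces to $f(x-y)$ at $\nu=-\frac12$. In the paper's convention, $\tau_x^\nu E_\nu(i\lambda\,\cdot)(y)=E_\nu(i\lambda x)E_\nu(i\lambda y)$, so at $\nu=-\frac12$ one gets $f(x+y)$. Writing the argument as $\sqrt{x^2+y^2-2|xy|\cos\theta}$, the weight is then $C_\nu(1-\operatorname{sgn}(xy)\cos\theta)\sin^{2\nu}\theta\,d\theta$. Since you treat both signs of $xy$ this costs nothing, but the unfavourable weight $1+\cos\theta$ belongs to $xy<0$. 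Note also that the paper gives no proof of this lemma; it is quoted from the cited works.

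The gap is Step 3, and it cannot be closed as you intend: the factor $1/\cos\theta$ is intrinsic, and the stated constant is false. Your substitution $u=\sin\theta$ in the $t$-integral gives exactly
\[
\int_{\cos\theta_0}^{1}(1+t)(1-t^2)^{\nu-\frac12}\,dt=\int_0^{\sin\theta_0}\Big(1+\frac{1}{\sqrt{1-u^2}}\Big)u^{2\nu}\,du .
\]
For this integral, $\frac{2}{2\nu+1}\sin^{2\nu+1}\theta_0$ is a strict \emph{lower} bound, and no split on the sign of $\nu-\frac12$ changes that.

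Your geometric bound is also attained. In the case with weight $1+\cos\theta$, take $|x|>r$ and $|y|=\sqrt{x^2-r^2}$. Then $x^2+y^2-2|x||y|\cos\theta<r^2$ iff $\cos\theta>\sqrt{1-r^2/x^2}$, so the admissible set is exactly $[0,\theta_0)$ with $\sin\theta_0=r/|x|$. Consequently
\[
\tau_x^\nu\chi_{B(0,r)}(y)-\frac{2C_\nu}{2\nu+1}\Big(\frac{r}{|x|}\Big)^{2\nu+1}=C_\nu\int_0^{\theta_0}(1-\cos\theta)\sin^{2\nu}\theta\,d\theta>0 .
\]
For $\nu=0$ this compares $\frac{1}{\pi}(\theta_0+\sin\theta_0)$ with $\frac{2}{\pi}\sin\theta_0$; at $r/|x|=0.99$ the values are about $0.77$ and $0.63$. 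So for $\nu>-\frac12$ the inequality with constant $\frac{2C_\nu}{2\nu+1}$ fails, and no bookkeeping will produce it.

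What your approach does prove is the following. Set $s=r/|x|$ and substitute $u=sv$:
\[
\int_0^{s}\frac{u^{2\nu}}{\sqrt{1-u^2}}\,du=s^{2\nu+1}\int_0^1\frac{v^{2\nu}}{\sqrt{1-s^2v^2}}\,dv\le s^{2\nu+1}\int_0^1\frac{v^{2\nu}}{\sqrt{1-v^2}}\,dv=\frac{s^{2\nu+1}}{2C_\nu}.
\]
Every admissible set lies in $[0,\theta_0)$, and $1-\cos\theta\le 1+\cos\theta$ there. Hence
\[
\tau_x^\nu\chi_{B(0,r)}(y)\le\Big(\frac12+\frac{C_\nu}{2\nu+1}\Big)\Big(\frac{r}{|x|}\Big)^{2\nu+1}\quad\text{for } |x|>r .
\]
This is the correct form of the lemma to aim for. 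It is also all the paper uses: in Case~2 of Steps~1 and~2 of the proof of Theorem~\ref{thm1}, the lemma enters only as $\tau_{-x}^\nu\chi_{B(0,r)}\le C(r/|x|)^{d_\nu-1}$ with an unspecified $C$.
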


\begin{lem}[\cite{spsa}] \label{lem1}
Let $1<p \leq q<\infty.$ Then the Dunkl Hardy-Littlewood maximal function $M^{\nu}$ is bounded from $L^{p,q}(\mathbb{R},d\mu_\nu)$ to $L^{p,q}(\mathbb{R},d\mu_\nu)$ i.e.,
  \begin{eqnarray*}
      \|M^{\nu}f\|_{L^{p,q}(\mathbb{R},d\mu_{\nu})} \le C \|f\|_{L^{p,q}(\mathbb{R},d\mu_{\nu})},~\forall f\in L^{p,q}(\mathbb{R}),
  \end{eqnarray*}
where $C$ is a positive constant independent of $f$.  
\end{lem}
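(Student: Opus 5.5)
The plan is to treat this lemma, as in the Chiarenza--Frasca argument for classical Morrey spaces, by a local/global decomposition. The local part is controlled by the $L^p(\mathbb{R},d\mu_\nu)$-boundedness of $M^\nu$, which we take as known for $1<p<\infty$; this is the Dunkl analogue of the Hardy--Littlewood theorem. The global part is handled by a pointwise bound.

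\textbf{Step 1: pointwise domination.} Write $\int_{B(0,r)}\tau^\nu_x|f|(y)\,d\mu_\nu(y)=\int_{\mathbb{R}}|f(z)|\,\tau^\nu_{-x}\chi_{B(0,r)}(z)\,d\mu_\nu(z)$, using the adjoint identity for $\tau^\nu_x$. By Lemma \ref{7plem2}, the weight $\tau^\nu_{-x}\chi_{B(0,r)}$ is supported in $B(x,r)$ and is bounded by $\min\{1,C(r/|x|)^{2\nu+1}\}$. Moreover $\mu_\nu(B(x,r))\asymp r(|x|+r)^{2\nu+1}$. Hence
\[
\frac{\min\{1,(r/|x|)^{2\nu+1}\}}{\mu_\nu(B(0,r))}\le \frac{C}{\mu_\nu(B(x,r))}
\]
in both regimes $|x|\le r$ and $|x|>r$. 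This gives
\[
M^\nu f(x)\le C\,\mathcal{M}f(x),\qquad \mathcal{M}f(x):=\sup_{r>0}\frac{1}{\mu_\nu(B(x,r))}\int_{B(x,r)}|f|\,d\mu_\nu ,
\]
where $\mathcal{M}$ is the Hardy--Littlewood maximal operator of the doubling space $(\mathbb{R},|\cdot|,\mu_\nu)$. Here $B(x,r)$ is contained in the union of two genuine intervals of comparable measure.

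\textbf{Step 2: splitting.} Fix $x_0$ and $r$, and write $f=f_1+f_2$ with $f_1=f\chi_{B(x_0,2r)}$.
\begin{itemize}
\item For $f_1$, the $L^p$-boundedness gives $\int |M^\nu f_1|^p\,\tau^\nu_{-x_0}\chi_{B(0,r)}\,d\mu_\nu\le \|M^\nu f_1\|_{L^p}^p\le C\int_{B(x_0,2r)}|f|^p\,d\mu_\nu$.
\item For $f_2$ and $z\in B(x_0,r)$, only radii $t\gtrsim r$ contribute to $\mathcal{M}f_2(z)$. Hölder's inequality then yields
\[
\mathcal{M}f_2(z)\le C\sup_{t>r}\mu_\nu(B(x_0,t))^{-1/p}\Big(\int_{B(x_0,2t)}|f|^p\,d\mu_\nu\Big)^{1/p}.
\]
Each such local $L^p$ integral is at most $t^{-d_\nu(1/q-1/p)}$ times the Morrey norm, once the comparison in Step 3 is available. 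The resulting factor $t^{-d_\nu/q}$ is decreasing in $t$ because $q<\infty$, so the supremum sits at $t=r$ and reproduces exactly the scaling $r^{d_\nu(1/p-1/q)}$ of the local integral over $B(0,r)$.
\end{itemize}

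\textbf{Step 3: the main obstacle.} Steps 1--2 naturally produce integrals $\int_{B(x_0,2r)}|f|^p\,d\mu_\nu$ over the sets $B(x,r)$. The Dunkl Morrey norm, however, is defined through $\int|f|^p\,\tau^\nu_{-x}\chi_{B(0,r)}\,d\mu_\nu$. Lemma \ref{7plem2} gives only an upper bound on the translated characteristic function, so we must show these two quantities are comparable up to constants and dilation of $r$. That is, we need a lower bound such as
\[
\tau^\nu_{-x}\chi_{B(0,2r)}\ge c\,\min\{1,(r/|x|)^{2\nu+1}\}\quad\text{on } B(x,r).
\]
I would first try to obtain this from the explicit one-dimensional formula for $W_\nu$, via Rösler's product formula, by bounding its density from below on the core of its support.

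If that lower bound turns out to be too delicate, the fallback is to avoid comparison altogether. Instead one works throughout with the weights $\tau^\nu_{-x_0}\chi_{B(0,r)}$, splitting $f=f\,\tau^\nu_{-x_0}\chi_{B(0,2r)}+\text{rest}$ and estimating the tail using the decay in Lemma \ref{7plem2} and the semigroup-like behaviour of $\tau^\nu$ on radial characteristic functions. This is more technical, but it stays within the tools already stated in the paper.
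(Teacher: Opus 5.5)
The paper gives no proof of this lemma; it is imported from \cite{spsa}. Your proposal therefore has to stand on its own. Step~1 is fine, and the Chiarenza--Frasca scheme is a viable route. However, there is a genuine gap exactly at the comparison you defer to Step~3, and that comparison is misformulated in a way that also breaks your Step~2 bookkeeping.

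\textbf{The missing factor.} The quantities $\int_{B(x,r)}|f|^p\,d\mu_\nu$ and $\int|f|^p\,\tau^\nu_{-x}\chi_{B(0,r)}\,d\mu_\nu$ are \emph{not} comparable up to constants and dilation. For $|x|>r$ the weight has size $\mu_\nu(B(0,r))/\mu_\nu(B(x,r))\approx (r/|x|)^{2\nu+1}$, so the correct statement is
\[
\int_{B(x,r)}|f|^p\,d\mu_\nu\le C\,\frac{\mu_\nu(B(x,r))}{\mu_\nu(B(0,r))}\,r^{d_\nu(1-\frac{p}{q})}\,\|f\|^p_{L^{p,q}(\mathbb{R},d\mu_\nu)} .
\]
The factor cannot be dropped: test $f=\chi_{(x_0-r,x_0+r)}$ with $|x_0|\gg r$ and $p<q$.

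Hence your first bullet cannot be closed. There you bound $\tau^\nu_{-x_0}\chi_{B(0,r)}$ by $1$, which leaves you needing $\int_{B(x_0,2r)}|f|^p\,d\mu_\nu\lesssim r^{d_\nu(1-p/q)}\|f\|^p_{L^{p,q}}$, and that is false. You must keep the bound $\tau^\nu_{-x_0}\chi_{B(0,r)}\le C\,\mu_\nu(B(0,r))/\mu_\nu(B(x_0,r))$ from Lemma~\ref{7plem2}; it then cancels the factor above exactly.

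In the second bullet, the claim $\int_{B(x_0,2t)}|f|^p\,d\mu_\nu\lesssim t^{d_\nu(1-p/q)}\|f\|^p_{L^{p,q}}$ fails for the same reason. Your conclusion $\mathcal{M}f_2\lesssim r^{-d_\nu/q}\|f\|_{L^{p,q}}$ on $B(x_0,r)$ is nevertheless correct. The reason is that the prefactor $\mu_\nu(B(x_0,t))^{-1/p}$, which you implicitly replaced by $t^{-d_\nu/p}$, absorbs the same factor. You then finish with $\int\tau^\nu_{-x_0}\chi_{B(0,r)}\,d\mu_\nu=\mu_\nu(B(0,r))$.

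\textbf{The lower bound is false on half of the ball.} Your displayed lower bound fails on the whole of $B(x,r)$, which in the paper's convention is the union of neighbourhoods of $x$ and of $-x$. For even $g$ and $\nu>-\frac12$ one has the positive representation
\[
\tau^\nu_{-x}g(y)=c_\nu\int_{-1}^{1}g\Big(\sqrt{x^2+y^2-2xyt}\Big)(1+t)(1-t^2)^{\nu-\frac12}\,dt .
\]
At $y=-x$ the condition $\sqrt{x^2+y^2-2xyt}<2r$ reads $1+t<2r^2/x^2$. So only $t$ near $-1$ contribute, where the factor $(1+t)$ vanishes, and $\tau^\nu_{-x}\chi_{B(0,2r)}(-x)\approx (r/|x|)^{2\nu+3}$ rather than $(r/|x|)^{2\nu+1}$.

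On the genuine interval $(x-r,x+r)$ with $|x|>2r$, by contrast, every $t\in[1-(r/x)^2,1]$ is admissible. This gives $\tau^\nu_{-x}\chi_{B(0,2r)}\gtrsim (r/|x|)^{2\nu+1}$ there. When $|x|\lesssim r$ you get a constant lower bound after enlarging the radius.

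The fix is to prove the lower bound on genuine intervals from this explicit formula. The component of $B(x,r)$ around $-x$ is then handled by applying the Morrey supremum at the point $-x$.

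With these two corrections your outline becomes a complete proof. As submitted, its key comparison lemma is neither proved nor correctly stated, and the ``fallback'' via semigroup-like behaviour of $\tau^\nu$ is not an argument.
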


\section {The boundedness of Dunkl fractional integral operators on Dunkl Morrey spaces} \label{sec3}
The aim of this section is to prove the boundedness of the Dunkl fractional integral operator $I_\gamma^\nu$ on the Dunkl Morrey space in $\mathbb{R}$. This boundedness of $I^\nu_\gamma$ plays a crucial role in establishing the Dunkl Stein-Weiss-Adams inequality on Dunkl Morrey spaces in the subsequent section (that is, the Adams type Hardy-Littlewood-Sobolev inequality on Morrey spaces in Dunkl setting).

\begin{lem} \label{lem2}
Let $0<\gamma<d_\nu$ and $1<p\le q<\frac{d_\nu}{\gamma}$. Assume that the parameters $s$ and $t$ satisfy the relation 
$$1<s\leq t<\infty,~\frac{1}{t}=\frac{1}{q}-\frac{\gamma}{d_\nu} ~and~ \frac{s}{t}=\frac{p}{q}.$$ Then the Dunkl fractional integral operator $I_{\gamma}^{\nu}$ is bounded from $L^{p,q}(\mathbb{R},d\mu_\nu)$ to $L^{s,t}(\mathbb{R},\allowbreak d\mu_\nu)$ i.e., 
\begin{eqnarray*}
\|I_{\gamma}^{\nu}f\|_{L^{s,t}(\mathbb{R},d\mu_\nu)}\leq C \|f\|_{L^{p,q}(\mathbb{R},d\mu_\nu)}, \forall f\in L^{p,q}(\mathbb{R}, d\mu_\nu),
\end{eqnarray*}
where $C$ is a positive constant independent of $f$. 
\end{lem}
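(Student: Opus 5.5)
Our plan is to follow Adams' original argument, which combines a Hedberg-type pointwise inequality with the Morrey boundedness of the maximal function. Everything is transferred to the Dunkl setting through the translation structure and the estimates in Lemma \ref{7plem2}.

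\textbf{Step 1: Hedberg-type pointwise bound.} We aim to show that for every $f\in L^{p,q}(\mathbb{R},d\mu_\nu)$ and a.e.\ $x$,
\begin{eqnarray*}
|I_\gamma^\nu f(x)|\le C\,\big(M^\nu f(x)\big)^{p/s}\,\|f\|_{L^{p,q}(\mathbb{R},d\mu_\nu)}^{1-p/s}.
\end{eqnarray*}
Using symmetry and the translation identity, write $I_\gamma^\nu|f|(x)=\int_{\mathbb{R}}\tau^\nu_x|f|(y)\,|y|^{\gamma-d_\nu}\,d\mu_\nu(y)$. Fix $\delta>0$ and split the integral into the regions $|y|<\delta$ and $|y|\ge\delta$.

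For the local part, decompose into dyadic shells $2^{-k-1}\delta\le|y|<2^{-k}\delta$. Each shell contributes at most $(2^{-k}\delta)^{\gamma}\,\mu_\nu(B(0,2^{-k}\delta))^{-1}\int_{B(0,2^{-k}\delta)}\tau^\nu_x|f|\,d\mu_\nu$. Summing over $k$ gives a bound $C\delta^\gamma M^\nu f(x)$.

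For the tail, use dyadic shells $2^k\delta\le|y|<2^{k+1}\delta$ together with Hölder's inequality. Since $\tau_x^\nu|f|\le(\tau_x^\nu|f|^p)^{1/p}$, which follows from positivity of the translation for $\nu\ge-\tfrac12$ (equivalently, $\tau_x^\nu$ acts via a probability measure on nonnegative functions), each shell is bounded by
\begin{eqnarray*}
(2^k\delta)^{\gamma-d_\nu}\,\mu_\nu(B(0,2^{k+1}\delta))^{1-1/p}\Big(\int_{B(0,2^{k+1}\delta)}\tau^\nu_x|f|^p\,d\mu_\nu\Big)^{1/p}\le C(2^k\delta)^{\gamma-d_\nu/q}\,\|f\|_{L^{p,q}}.
\end{eqnarray*}
The geometric series converges because $\gamma<d_\nu/q$, giving a tail bound $C\delta^{\gamma-d_\nu/q}\|f\|_{L^{p,q}}$.

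To finish Step 1, choose $\delta$ with $\delta^{d_\nu/q}=\|f\|_{L^{p,q}}/M^\nu f(x)$. The two bounds then combine to $C\,(M^\nu f)^{1-\gamma q/d_\nu}\|f\|^{\gamma q/d_\nu}$. Finally, $1-\gamma q/d_\nu=q/t=p/s$, using $\frac1t=\frac1q-\frac{\gamma}{d_\nu}$ and $s/t=p/q$.

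\textbf{Step 2: Morrey norms.} Raise the pointwise bound to the power $s$ and apply $\tau^\nu_z$ over $B(0,r)$. Positivity and linearity of $\tau_z^\nu$ give
\begin{eqnarray*}
\int_{B(0,r)}\tau^\nu_z|I^\nu_\gamma f|^s\,d\mu_\nu\le C\|f\|_{L^{p,q}}^{s-p}\int_{B(0,r)}\tau_z^\nu (M^\nu f)^p\,d\mu_\nu.
\end{eqnarray*}
Lemma \ref{lem1} bounds the last integral by $C r^{-d_\nu(1/q-1/p)p}\|f\|^p_{L^{p,q}}$. Multiplying by $r^{d_\nu(1/t-1/s)s}$, the powers of $r$ cancel because $(\tfrac1t-\tfrac1s)s=\tfrac st-1=\tfrac pq-1=(\tfrac1q-\tfrac1p)p$. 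This yields $\|I_\gamma^\nu f\|_{L^{s,t}}\le C\|f\|_{L^{p,q}}$.

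\textbf{Main obstacle.} The delicate point is Step 1's use of $\tau_x^\nu$. One must justify that $I_\gamma^\nu f(x)$ can be rewritten as an integral of $\tau_x^\nu f$ against $K_\gamma^\nu$; this uses the adjoint identity and evenness, or equivalently Lemma \ref{7plem1}(i), with a density argument to go beyond $L^2$. One must also justify positivity and the Jensen inequality $(\tau_x^\nu|f|)^p\le\tau_x^\nu|f|^p$ on $\mathbb{R}$. The translation $\tau_x^\nu$ is not positive on general functions, so I would first try to bypass this. The route is to dominate the kernel by positive translates: by Lemma \ref{7plem1}(iii), $\tau_x^\nu K_\gamma^\nu$ is nonnegative. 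I would then bound it by $\sum_k (2^k\delta)^{\gamma-d_\nu}\tau_x^\nu\chi_{B(0,2^{k+1}\delta)}$ via the layer-cake representation of the radial decreasing kernel. Lemma \ref{7plem2} guarantees these translates are nonnegative and bounded by $1$, so the dyadic estimates above can be run directly on $|f|$.
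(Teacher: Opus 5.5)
Your proposal follows the paper's proof essentially step for step. You split $\int_{\mathbb{R}}\tau^\nu_{-x}|f|(y)\,|y|^{\gamma-d_\nu}\,d\mu_\nu(y)$ at radius $r$, bound the inner dyadic shells by $Cr^{\gamma}M^\nu f(-x)$ and the outer ones by $Cr^{\gamma-d_\nu/q}\|f\|_{L^{p,q}(\mathbb{R},d\mu_\nu)}$ via H\"older, optimize $r$ to get $|I^\nu_\gamma f(x)|\le C\|f\|^{1-q/t}_{L^{p,q}(\mathbb{R},d\mu_\nu)}(M^\nu f(-x))^{q/t}$, and conclude with Lemma~\ref{lem1} using $sq/t=p$.

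The one point to correct is your Step~1 justification of $\tau^\nu_x|f|\le(\tau^\nu_x|f|^p)^{1/p}$. As stated it is false: $\sigma_{x,y}$ is a signed measure, and $\tau^\nu_x$ preserves positivity on even functions such as $K^\nu_\gamma$ and $\chi_{B(0,r)}$, but not in general. The bypass in your last paragraph is the right one and is exactly what the paper does: H\"older against the nonnegative weight $\tau^\nu_x\big(|\cdot|^{\gamma-d_\nu}\chi_{B(0,2^{k+1}r)\setminus B(0,2^kr)}\big)$, followed by the adjoint identity. The same duality shows that the maximal function is evaluated at $-x$ rather than $x$, which is harmless after taking the supremum over $x$ in the Morrey norm.
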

\begin{pf}
Let $r>0$ be given. Then, for any $f\in L^{p,q}(\mathbb{R},d\mu_\nu)$, we write from \eqref{eq2.1}
\begin{eqnarray}
    |I_\gamma^\nu f(x)|&=& \bigg|\int_{\mathbb{R}}\tau_x^\nu|y|^{\gamma-d_\nu}f(y)d\mu_{\nu}(y)\bigg|\nonumber\\
    &\le & \int\limits_{\mathbb{R}}\tau_x^\nu|y|^{\gamma-d_\nu}|f|(y)d\mu_{\nu}(y)\nonumber\\
    &=& \int\limits_{\mathbb{R}}\tau_{-x}^\nu|f|(y)|y|^{\gamma-d_\nu}d\mu_{\nu}(y)\nonumber\\
    &=& \int\limits_{B(0,r)}\tau_{-x}^\nu|f|(y)|y|^{\gamma-d_\nu}d\mu_{\nu}(y)+ \int\limits_{B^c(0,r)}\tau_{-x}^\nu|f|(y)|y|^{\gamma-d_\nu}d\mu_{\nu}(y)\nonumber\\
    &=& I_{1}+I_{2}, \label{eq:eq3.0}
\end{eqnarray}
where 
\begin{eqnarray*}
    I_{1}&:=&\int\limits_{B(0,r)}\tau_{-x}^\nu|f|(y)|y|^{\gamma-d_\nu}d\mu_{\nu}(y)
\end{eqnarray*}
and
\begin{eqnarray*} 
    I_{2}&:=&\int\limits_{B^c(0,r)}\tau_{-x}^\nu|f|(y)|y|^{\gamma-d_\nu}d\mu_{\nu}(y).
\end{eqnarray*}
Let us first estimate the integral $I_1$.
\begin{eqnarray}
    I_1&=& \int\limits_{B(0,r)}\tau_{-x}^\nu|f|(y)|y|^{\gamma-d_\nu}d\mu_{\nu}(y)\nonumber\\
         &=& \sum_{k=0}^{\infty}\int\limits_{\frac{r}{2^{k+1}}\le|y|<\frac{r}{2^k}}\tau_{-x}^\nu|f|(y)|y|^{\gamma-d_\nu}d\mu_{\nu}(y)\nonumber\\
         &\le& \sum\limits_{k=0}^{\infty} \bigg(\frac{r}{2^{k+1}}\bigg)^{\gamma-d_\nu} \int\limits_{B(0,\frac{r}{2^k})} \tau_{-x}^\nu|f|(y) d\mu_\nu(y)\nonumber\\
             &\le& C M^\nu f(-x) \sum\limits_{k=0}^{\infty} \bigg(\frac{r}{2^{k+1}}\bigg)^{\gamma-d_\nu} \bigg(\frac{r}{2^k}\bigg)^{d_\nu}\nonumber\\
         &\le& Cr^\gamma M^\nu f(-x) \sum\limits_{k=0}^{\infty}\frac{1}{2^{k\gamma}}\nonumber\\
         &{\le}&Cr^\gamma M^{\nu}f(-x),\label{eq:eq3.1}
\end{eqnarray}
as $\gamma>0$. Now we consider the integral $I_2$. Then
\begin{eqnarray}
    I_2&=& \int\limits_{B^c(0,r)} \tau_{-x}^\nu|f|(y)|y|^{\gamma-d_\nu}d\mu_\nu(y)\nonumber\\
    &=& \sum\limits_{k=0}^{\infty} \int\limits_{2^kr\le |y|<2^{k+1}r} \tau_{-x}^\nu|f|(y)|y|^{\gamma-d_\nu}d\mu_\nu(y)\nonumber\\
    &=& \sum\limits_{k=0}^{\infty} \int\limits_{\mathbb{R}} \tau_{-x}^\nu|f|(y)|y|^{\gamma-d_\nu}\chi_{B(0,2^{k+1}r)\setminus B(0,2^kr)}(y) d\mu_\nu(y)\nonumber\\
    &=& \sum\limits_{k=0}^{\infty} \int\limits_{\mathbb{R}} |f|(y)\bigg[\tau_{x}^\nu\bigg(|\cdot|^{\gamma-d_\nu}\chi_{B(0,2^{k+1}r)\setminus B(0,2^kr)}\bigg) (y)\bigg]^{\frac{1}{p}+\frac{1}{p^\prime}} d\mu_\nu(y)\nonumber\\
    &=& \sum\limits_{k=0}^{\infty} \int\limits_{\mathbb{R}} |f|(y)\bigg[\tau_{x}^\nu\bigg(|\cdot|^{\gamma-d_\nu}\chi_{B(0,2^{k+1}r)\setminus B(0,2^kr)}\bigg) (y)\bigg]^{\frac{1}{p}}\nonumber\\
    &&\times\bigg[\tau_{x}^\nu\bigg(|\cdot|^{\gamma-d_\nu}\chi_{B(0,2^{k+1}r)\setminus B(0,2^kr)}\bigg) (y)\bigg]^{\frac{1}{p^\prime}} d\mu_\nu(y)\nonumber\\
    &\le& \sum\limits_{k=0}^{\infty} \bigg[\int\limits_{\mathbb{R}} |f|^p(y)\tau_{x}^\nu\bigg(|\cdot|^{\gamma-d_\nu}\chi_{B(0,2^{k+1}r)\setminus B(0,2^kr)}\bigg)(y)d\mu_\nu(y) \bigg]^{\frac{1}{p}} \nonumber \\
    &&\times\bigg[\int\limits_{\mathbb{R}}\tau_{x}^\nu\bigg(|\cdot|^{\gamma-d_\nu}\chi_{B(0,2^{k+1}r)\setminus B(0,2^kr)}\bigg)(y)d\mu_\nu(y)\bigg]^{\frac{1}{p^\prime}},\nonumber
\end{eqnarray}
using H\"older's inequality with $\frac{1}{p}+\frac{1}{p^\prime}=1$. Therefore making use of Theorem~\ref{7pth3} and the assumption $\gamma<\frac{d_\nu}{q}$, we obtain
 \begin{eqnarray}   
   I_2&\le& \sum\limits_{k=0}^{\infty} \bigg(\int\limits_{\mathbb{R}} \tau_{-x}^\nu|f|^p(y)|y|^{\gamma-d_\nu}\chi_{B(0,2^{k+1}r)\setminus B(0,2^kr)}(y) d\mu_\nu(y)\bigg)^{\frac{1}{p}}\nonumber\\&&\times \bigg(\int\limits_\mathbb{R}|y|^{\gamma-d_\nu}\chi_{B(0,2^{k+1}r)\setminus B(0,2^kr)}(y)d\mu_\nu(y)\bigg)^{\frac{1}{p^{\prime}}}\nonumber\\
   &=& \sum\limits_{k=0}^\infty \bigg(\int\limits_{2^kr\le|y|<2^{k+1}r}\tau_{-x}^\nu|f|^p(y)|y|^{\gamma-d_\nu}d\mu_\nu(y)\bigg)^{\frac{1}{p}}\bigg(\int\limits_{2^kr\le|y|<2^{k+1}r}|y|^{\gamma-d_\nu}d\mu_\nu(y)\bigg)^{\frac{1}{p^\prime}}\nonumber\\
    &\le& \sum\limits_{k=0}^\infty (2^kr)^{\frac{\gamma-d_\nu}{p}}\bigg(\int\limits_{B(0,2^{k+1}r)}\tau_{-x}^\nu|f|^p(y)d\mu_{\nu}(y)\bigg)^{\frac{1}{p}} (2^kr)^{\frac{\gamma-d_\nu}{p^{\prime}}}\bigg(\int\limits_{B(0,2^{k+1}r)}d\mu_{\nu}(y)\bigg)^{\frac{1}{p^{\prime}}}\nonumber\\
    &\le& \sum\limits_{k=0}^{\infty}(2^kr)^{\gamma-d_\nu}\bigg(\int\limits_{B(0,2^{k+1}r)}\tau_{-x}^\nu|f|^p(y)d\mu_{\nu}(y)\bigg)^{\frac{1}{p}} \mu_{\nu}(B(0,2^{k+1}r))^{\frac{1}{p^{\prime}}}\nonumber\\
     &\le& C\|f\|_{{L^{p,q}}(\mathbb{R},d\mu_\nu)} \sum\limits_{k=0}^{\infty}(2^kr)^{\gamma-d_\nu}(2^{k+1}r)^{-{d_\nu(\frac{1}{q}-\frac{1}{p})}}(2^{k+1}r)^{\frac{d_\nu}{p^\prime}}\nonumber\\
    &=& Cr^{\gamma-\frac{d_\nu}{q}}\|f\|_{{L^{p,q}}(\mathbb{R},d\mu_\nu)}\sum\limits_{k=0}^{\infty}2^{k(\gamma-\frac{d_\nu}{q})}\nonumber\\
    &\le& Cr^{\gamma-\frac{d_\nu}{q}}\|f\|_{{L^{p,q}}(\mathbb{R},d\mu_\nu)}.\label{eq:eq3.2}    
\end{eqnarray}

Together from the last two estimates \eqref{eq:eq3.1} and \eqref{eq:eq3.2} and using \eqref{eq:eq3.0}, we obtain the following inequality:
$$|I^\nu_\gamma f(x)|\le C(r^\gamma M^\nu f(-x)+r^{\gamma-\frac{d_\nu}{q}}\|f\|_{{L^{p,q}}(\mathbb{R},d\mu_\nu)}).$$
Since the above inequality is valid for any $r>0$, by setting $r=\bigg(\frac{\|f\|_{{L^{p,q}}(\mathbb{R},d\mu_\nu)}}{M^\nu f(-x)}\bigg)^{\frac{q}{d_\nu}}$ and using the relation $\frac{1}{t}=\frac{1}{q}-\frac{\gamma}{d_\nu}$, we arrive at the estimate:
\begin{eqnarray}
    |I_{\gamma}^\nu f(x)|\le C\|f\|^{1-\frac{q}{t}}_{{L^{p,q}}(\mathbb{R},d\mu_\nu)}(M^{\nu}f(-x))^{\frac{q}{t}}.\label{eq:eq3.4}
\end{eqnarray}
Now for $x\in \mathbb{R}$ and $R>0$, we consider
\begin{eqnarray*}
    &&R^{d_\nu(\frac{1}{t}-\frac{1}{s})}\bigg(\int\limits_{B(0,R)}\tau^\nu_x|I_\gamma^\nu f|^s(y)d\mu_\nu(y)\bigg)^{\frac{1}{s}}\\
    &=& R^{d_\nu(\frac{1}{t}-\frac{1}{s})}\bigg(\int\limits_{\mathbb{R}}|I_\gamma^\nu f|^s(y)\tau^\nu_{-x}\chi_{B(0,R)}(y)d\mu_\nu(y)\bigg)^{\frac{1}{s}}\\
     \end{eqnarray*}
    \begin{eqnarray*}
     &{\le}& CR^{d_\nu(\frac{1}{t}-\frac{1}{s})}\|f\|^{1-\frac{q}{t}}_{{L^{p,q}}(\mathbb{R},d\mu_\nu)}\bigg(\int\limits_\mathbb{R}|M^\nu f|^{\frac{sq}{t}}(y)\tau^\nu_{-x}\chi_{B(0,R)}(y)d\mu_{\nu}(y)\bigg)^{\frac{1}{s}}\\
    &=& CR^{d_\nu(\frac{1}{t}-\frac{1}{s})}\|f\|^{1-\frac{q}{t}}_{{L^{p,q}}(\mathbb{R},d\mu_\nu)}\bigg(\int\limits_{B(0,R)}\tau^\nu_x|M^\nu f|^p(y)d\mu_{\nu}(y)\bigg)^{\frac{1}{s}},\\
    \end{eqnarray*}
    using \eqref{eq:eq3.4} and the relation $\frac{p}{q}=\frac{s}{t}$. Thus
    \begin{eqnarray*}
    &&R^{d_\nu(\frac{1}{t}-\frac{1}{s})}\bigg(\int\limits_{B(0,R)}\tau^\nu_x|I_\gamma^\nu f|^s(y)d\mu_\nu(y)\bigg)^{\frac{1}{s}}\\
    &\le& C\|f\|^{1-\frac{q}{t}}_{{L^{p,q}}(\mathbb{R},d\mu_\nu)}\bigg[R^{d_\nu(\frac{1}{t}-\frac{1}{s})\frac{t}{q}}\bigg(\int\limits_{B(0,R)}\tau^\nu_x|M^\nu f|^p(y)d\mu_{\nu}(y)\bigg)^{\frac{1}{p}}\bigg]^{\frac{q}{t}}\\
    &=& C\|f\|^{1-\frac{q}{t}}_{{L^{p,q}}(\mathbb{R},d\mu_\nu)}\bigg[R^{d_\nu(\frac{1}{q}-\frac{1}{p})}\bigg(\int\limits_{B(0,R)}\tau^\nu_x|M^\nu f|^p(y)d\mu_{\nu}(y)\bigg)^{\frac{1}{p}}\bigg]^{\frac{q}{t}}\\
    &\le& C\|f\|^{1-\frac{q}{t}}_{{L^{p,q}}(\mathbb{R},d\mu_\nu)}\|M^\nu f\|_{{L^{p,q}}(\mathbb{R},d\mu_\nu)}^{\frac{q}{t}}.
 \end{eqnarray*}   
Subsequently, using the boundedness of the Dunkl maximal function $M^\nu f$ from Lemma~\ref{lem1}, we conclude that

 \begin{eqnarray*}
      R^{d_\nu(\frac{1}{t}-\frac{1}{s})}\bigg(\int\limits_{B(0,R)}\tau^\nu_x|I_\gamma f|^s(y)d\mu_\nu(y)\bigg)^{\frac{1}{s}}
      &\le& C\|f\|^{1-\frac{q}{t}}_{{L^{p,q}}(\mathbb{R},d\mu_\nu)}\|f\|^{\frac{q}{t}}_{{L^{p,q}}(\mathbb{R},d\mu_\nu)}\\
    &=& C\|f\|_{{L^{p,q}}(\mathbb{R},d\mu_\nu)}.
 \end{eqnarray*}
 Finally, taking the supremum over $R>0$ and $x\in \mathbb{R}$ in the above inequality, we
complete the proof.
   
\end{pf}

\section {Adams type Dunkl Stein-Weiss inequality on Dunkl Morrey spaces} \label{sec4}
 In this section, we establish the Adams-type Dunkl Stein-Weiss inequality on the Dunkl Morrey space. Our idea of the proof is mainly based on \cite{kassy}.
 \begin{thm} \label{thm1}
     Let $0<\gamma <d_\nu,~ \alpha,\beta \in \mathbb{R},~0\le\alpha+\beta  \le \gamma<d_\nu,~1<p\le q< \frac{d_\nu}{\gamma-\alpha-\beta}$. Assume that the parameters $s$ and $t$ satisfy
     $$1<s\le t <\infty,~\frac{1}{t}=\frac{1}{q}-\frac{\gamma-\alpha-\beta}{d_\nu},~\frac{s}{t}=\frac{p}{q} ~\text{and}~\alpha<\frac{d_\nu}{q'},~\beta<\frac{d_\nu}{t}.$$
     In addition, if we assume that $d_\nu>t$, then
     $$\||\cdot|^{-\beta}I_\gamma^\nu f\|_{{L^{s,t}}(\mathbb{R},d\mu_\nu)}\le C\||\cdot|^\alpha f\|_{{L^{p,q}}(\mathbb{R},d\mu_\nu)},~\forall |\cdot|^\alpha f\in {{L^{p,q}}(\mathbb{R},d\mu_\nu)},$$
     where $C$ is a positive constant independent of $f$.

 \end{thm}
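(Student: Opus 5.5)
The plan is to follow the Kassymov et al.\ approach and decompose $\mathbb{R}\times\mathbb{R}$ according to the relative sizes of $|x|$ and $|y|$. Set $g=|\cdot|^{\alpha}f$, so that $\|g\|_{L^{p,q}(\mathbb{R},d\mu_\nu)}$ is the right-hand side. Write
\[
|x|^{-\beta}|I_\gamma^\nu f(x)|\le \sum_{j=1}^{3}|x|^{-\beta}\int_{E_j(x)}|f(y)|\,\Phi(x,y)\,d\mu_\nu(y)=:J_1(x)+J_2(x)+J_3(x).
\]
Here $E_1(x)=\{|y|<|x|/2\}$, $E_2(x)=\{|x|/2\le|y|\le 2|x|\}$ and $E_3(x)=\{|y|>2|x|\}$, and $\Phi$ is the kernel of Lemma~\ref{7plem1}.

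For $J_1$ and $J_3$ I will use Lemma~\ref{7plem1}(iii). Since $\mu_y^\nu$ is supported in $[-|y|,|y|]$, we have $x^2+y^2-2x\eta\ge(|x|-|y|)^2$. This gives $\Phi(x,y)\le C|x|^{\gamma-d_\nu}$ on $E_1$ and $\Phi(x,y)\le C|y|^{\gamma-d_\nu}$ on $E_3$. So $J_1$ and $J_3$ reduce to one-dimensional weighted Hardy-type operators acting on $|g|$:
\[
J_1(x)\le C|x|^{\gamma-d_\nu-\beta}\int_{|y|<|x|/2}|y|^{-\alpha}|g(y)|\,d\mu_\nu(y),\qquad
J_3(x)\le C|x|^{-\beta}\int_{|y|>2|x|}|y|^{\gamma-d_\nu-\alpha}|g(y)|\,d\mu_\nu(y).
\]
I will bound these on Morrey spaces directly, by dyadic decomposition and H\"older on annuli. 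In $J_1$, the inner integral over $2^{-k-1}|x|\le|y|<2^{-k}|x|$ is controlled by $(2^{-k}|x|)^{-\alpha+d_\nu/p'}\big(\int_{B(0,2^{-k}|x|)}|g|^p\,d\mu_\nu\big)^{1/p}$, and then by $\|g\|_{L^{p,q}}(2^{-k}|x|)^{-\alpha+d_\nu/q'}$. The sum converges because $\alpha<d_\nu/q'$. The result is the pointwise bound $J_1(x)\le C\|g\|_{L^{p,q}}|x|^{\gamma-\alpha-\beta-d_\nu/q}=C\|g\|_{L^{p,q}}|x|^{-d_\nu/t}$. The term $J_3$ is handled the same way over outer annuli, where convergence uses $\gamma-\alpha<d_\nu/q$. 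This follows from $\gamma-\alpha-\beta<d_\nu/q$ only when $\beta\ge0$, so the case $\beta<0$ needs care; there the $|x|^{-\beta}$ factor helps near $0$.

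The function $|x|^{-d_\nu/t}$ belongs to $L^{s,t}(\mathbb{R},d\mu_\nu)$ because $s<t$, or more precisely because its local $L^s$ integrability uses $s d_\nu/t<d_\nu$. Still, I must check its Morrey norm with translations $\tau_x^\nu$. For that I will use Lemma~\ref{7plem2}, splitting into $|x|\le R$ and $|x|>R$; the bound $\tau_x^\nu\chi_{B(0,R)}\le C(R/|x|)^{2\nu+1}$ handles the far case. I expect the hypothesis $d_\nu>t$ to enter exactly here, to make the weight $|y|^{-d_\nu s/t}$ integrable against the translated characteristic function and to control the Morrey norm of the radial power. If the pure pointwise route fails for $s<t$, the fallback is to bound $J_1$ and $J_3$ by $|x|^{-\beta}\cdot$ (a maximal-type quantity) and use Lemma~\ref{lem1}.

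The main obstacle is $J_2$, where $|y|\sim|x|$ and the kernel is singular. On $E_2$ the weights are comparable: $|x|^{-\beta}\sim|y|^{-\beta}$ and $|y|^{-\alpha}\sim|x|^{-\alpha}$. Hence
\[
J_2(x)\le C\int_{E_2(x)}|g(y)||y|^{-\alpha-\beta}\Phi(x,y)\,d\mu_\nu(y)\le C|x|^{-\alpha-\beta}I_\gamma^\nu\big(|g|\chi_{\{|y|\sim|x|\}}\big)(x).
\]
I will treat this locally. Decompose $\mathbb{R}$ into dyadic shells $A_k=\{2^k\le|x|<2^{k+1}\}$. On $A_k$, the function $|x|^{-\alpha-\beta}$ is comparable to $2^{-k(\alpha+\beta)}$, and $|x|^{-\alpha-\beta}\Phi$ behaves like the kernel of $I^\nu_{\gamma-\alpha-\beta}$, since $|x|^{-\alpha-\beta}\lesssim$ the distance-power contribution whenever $\alpha+\beta\ge0$. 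This uses the support of $\tau$ from Lemma~\ref{7plem2}. The estimate is then reduced to Lemma~\ref{lem2} with $\gamma$ replaced by $\gamma-\alpha-\beta$. The exponents $p,q,s,t$ satisfy exactly the hypotheses of Lemma~\ref{lem2}, with $\frac1t=\frac1q-\frac{\gamma-\alpha-\beta}{d_\nu}$ and $\frac st=\frac pq$. When $\alpha+\beta=\gamma$, the operator becomes a local Hardy–Littlewood-type average, and I will use Lemma~\ref{lem1} instead.

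The delicate point is making the comparison $|x|^{-\alpha-\beta}\Phi(x,y)\lesssim \Phi_{\gamma-\alpha-\beta}(x,y)$ rigorous for the Dunkl translate. I plan to use representation (iii) of Lemma~\ref{7plem1}: on $E_2$ the quantity $(x^2+y^2-2x\eta)^{1/2}\le 3|x|$, so $(x^2+y^2-2x\eta)^{\frac{\gamma-d_\nu}{2}}|x|^{-\alpha-\beta}\le C(x^2+y^2-2x\eta)^{\frac{\gamma-\alpha-\beta-d_\nu}{2}}$ pointwise in $\eta$. Integrating against $\mu_y^\nu$ then yields the desired bound. Summing the three pieces gives the theorem.
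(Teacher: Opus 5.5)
Your architecture is the paper's. It uses the same three regions and the same kernel bounds from Lemma~\ref{7plem1}(iii). The dyadic H\"older estimates are the same, giving $J_1,J_3\le C\||\cdot|^\alpha f\|_{L^{p,q}(\mathbb{R},d\mu_\nu)}|x|^{-d_\nu/t}$. So is the $|x|\le r$ / $|x|>r$ computation via Lemma~\ref{7plem2}, where $d_\nu>t$ enters. For $\gamma>\alpha+\beta$ you use the same pointwise-in-$\eta$ comparison, which reduces $|x|^{-\beta}J_2$ to $I^\nu_{\gamma-\alpha-\beta}(|\cdot|^\alpha|f|)$ and then to Lemma~\ref{lem2}. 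Three points need attention.

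\textbf{The case $s=t$ is missing.} The statement allows $s=t$ (equivalently $p=q$). Your treatment of $J_1,J_3$ rests on $|x|^{-d_\nu/t}\in L^{s,t}(\mathbb{R},d\mu_\nu)$, which you justify by $s<t$. At $s=t$ this is false: $\int_{B(0,R)}|y|^{-d_\nu}\,d\mu_\nu(y)=\infty$ for every $R$, since the integrand is $c|y|^{-1}\,dy$. So the pointwise bound yields nothing there, and this case must be handled separately. Since $0\le\tau^\nu_{-x}\chi_{B(0,r)}\le 1$, one has $\|f\|_{L^{p,p}(\mathbb{R},d\mu_\nu)}=\|f\|_{L^{p}(\mathbb{R},d\mu_\nu)}$. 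The claim then becomes the Lebesgue-space Dunkl Stein--Weiss inequality of Gorbachev et al., which is how the paper disposes of it before assuming $s<t$. Alternatively, prove genuine $L^p\to L^t$ weighted Hardy inequalities for $J_1$ and $J_3$.

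\textbf{Convergence in $J_3$.} Nothing delicate happens here, and your sign analysis is reversed. The scaling relation gives $\gamma-\alpha-\frac{d_\nu}{q}=\beta-\frac{d_\nu}{t}$. Hence the needed condition $\gamma-\alpha<d_\nu/q$ is exactly the hypothesis $\beta<d_\nu/t$, whatever the sign of $\beta$. From $\gamma-\alpha-\beta<d_\nu/q$ alone it follows when $\beta\le 0$, not when $\beta\ge 0$.

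\textbf{The endpoint $\gamma=\alpha+\beta$.} Here you genuinely depart from the paper. The paper uses Young's inequality (Lemma~\ref{lem3}) on dyadic annuli and a second split $|x|\le r$ / $|x|>r$, and it needs both $p<q$ and $d_\nu>q$. Your maximal-function route is viable and needs neither of these in this subcase. It does not work through the kernel comparison you describe, though, because that produces the non-integrable kernel $(x^2+y^2-2x\eta)^{-d_\nu/2}$. Argue instead as follows.
\begin{itemize}
\item For $|y|\le 2|x|$, every $\sqrt{x^2+y^2-2x\eta}$ in (iii) is at most $3|x|$.
\item The representation (iii) holds for every even function. Hence $\Phi(x,y)=\tau^\nu_x\big(K^\nu_\gamma\chi_{B(0,4|x|)}\big)(y)$ there, and $\tau^\nu_x$ is order preserving on even functions.
\item Dominate $K^\nu_\gamma\chi_{B(0,4|x|)}$ by $C\sum_{k\ge 0}(2^{-k}|x|)^{\gamma-d_\nu}\chi_{B(0,2^{2-k}|x|)}$.
\item Move $\tau^\nu_x$ onto $|g|$ by duality, which gives $|x|^{-\beta}J_2(x)\le C\,M^\nu g(-x)$.
\item Lemma~\ref{lem1}, together with the reflection invariance of the Dunkl Morrey norm, finishes this subcase.
\end{itemize}
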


\begin{pf}
We note that when $s=t$, from the hypothesis condition $\frac{s}{t}=\frac{p}{q}$, it follows that $p=q$. Then the proof of Theorem~\ref{thm1} follows from the proof of Theorem~1.4 in \cite{gorba}. So, without loss of generality, we assume that $s<t$. Then $p<q$.
We write the Dunkl fractional integral operator $I_\gamma^\nu f$ as
\begin{eqnarray*}
    I_\gamma^\nu f(y)&=&\int\limits_\mathbb{R} \tau^\nu_y|z|^{\gamma-d_\nu}f(z)d\mu_\nu(z)\\
    &=&\int\limits_{|z|<\frac{|y|}{2}} \tau^\nu_y|z|^{\gamma-d_\nu}f(z)d\mu_\nu(z)+\int\limits_{\frac{|y|}{2}\le|z|\le2|y|} \tau^\nu_y|z|^{\gamma-d_\nu}f(z)d\mu_\nu(z)\\\
    &&+\int\limits_{|z|>2|y|} \tau^\nu_y|z|^{\gamma-d_\nu}f(z)d\mu_\nu(z)
\end{eqnarray*}
and hence, we have
\begin{eqnarray}
    |I_\gamma^\nu f(y)|&\le&\int\limits_{|z|<\frac{|y|}{2}} \tau^\nu_y|z|^{\gamma-d_\nu}|f(z)|d\mu_\nu(z)+\int\limits_{\frac{|y|}{2}\le|z|\le2|y|} \tau^\nu_y|z|^{\gamma-d_\nu}|f(z)|d\mu_\nu(z)\nonumber\\
    &&+\int\limits_{|z|>2|y|} \tau^\nu_y|z|^{\gamma-d_\nu}|f(z)|d\mu_\nu(z)\nonumber\\
    &=&J_1(y)+J_2(y)+J_3(y). \label{eq.eq5}
\end{eqnarray}
Now for $x\in\mathbb{R}$, $r>0$, we consider
\begin{eqnarray}
\int\limits_{B(0,r)}\tau_x^\nu(|\cdot|^{-\beta s}|I_\gamma^\nu f|^s)(y)d\mu_\nu(y)&=&\int\limits_{\mathbb{R}}\tau_x^\nu(|\cdot|^{-\beta s}|I_\gamma^\nu f|^s)(y)\chi_{B(0,r)}(y)d\mu_\nu(y)\nonumber\\
    &=&\int\limits_{\mathbb{R}}|y|^{-\beta s}|I^\nu_\gamma f|^s(y)\tau^\nu_{-x}\chi_{B(0,r)}(y)d\mu_\nu(y)\nonumber\\
    &\le&C\sum\limits_{i=1}^3\int\limits_{B(x,r)}|y|^{-\beta s}(J_i(y))^s\tau^\nu_{-x}\chi_{B(0,r)}(y)d\mu_\nu(y),\nonumber
    \\ \label{eq1}
\end{eqnarray}
using \eqref{eq.eq5}. The proof is organized into 4 steps:\\

\textbf{Step 1.} Consider the first term $J_1(y)=\int\limits_{|z|<\frac{|y|}{2}} \tau^\nu_y|z|^{\gamma-d_\nu}|f(z)|d\mu_\nu(z).$
Now using the representation of $\tau_y^\nu|z|^{\gamma-d_\nu}$ given in Lemma~\ref{7plem1}, we have
\begin{eqnarray}
    \tau_y^\nu|z|^{\gamma-d_\nu}=\int\limits_\mathbb{R}(y^2+z^2-2z\xi)^{\frac{\gamma-d_\nu}{2}}d\mu^\nu_y(\xi),\label{eq2}
\end{eqnarray}
where supp$~\mu^\nu_y \subset\{\xi \colon |\xi|\le|y|\}$.\\
Since $|z|<\frac{|y|}{2}$ and $|\xi|\le|y|$, it follows that
\begin{eqnarray*}
    \sqrt{y^2+z^2-2z\xi}\ge\sqrt{|y|^2+|z|^2-2|z||\xi|}\ge\sqrt{y^2+z^2-2|y||z|}=|y|-|z|\ge\frac{|y|}{2}.
\end{eqnarray*}
This implies that
\begin{eqnarray*}
    (y^2+z^2-2z\xi)^{\frac{\gamma-d_\nu}{2}}\le\bigg(\frac{|y|}{2}\bigg)^{\gamma-d_\nu}.
\end{eqnarray*}
Therefore
\begin{eqnarray}
    \tau_y^\nu|z|^{\gamma-d_\nu}&=&\int\limits_\mathbb{R}(y^2+z^2-2z\xi)^{\frac{\gamma-d_\nu}{2}}d\mu^\nu_y(\xi)\nonumber\\
    &\le&C(|y|)^{\gamma-d_\nu}\int\limits_\mathbb{R}d\mu_y^\nu(\xi)=C(|y|)^{\gamma-d_\nu}.\label{eq:e4.1}  
\end{eqnarray}
Now using \eqref{eq:e4.1}, we obtain
\begin{eqnarray}   
       J_1(y)&=&\int\limits_{|z|<\frac{|y|}{2}} \tau^\nu_y|z|^{\gamma-d_\nu}|f(z)|d\mu_\nu(z)\nonumber\\
    &\le&C(|y|)^{\gamma-d_\nu}\int\limits_{|z|<\frac{|y|}{2}}|f(z)|d\mu_\nu(z)\nonumber\\
    &\le&C(|y|)^{\gamma-d_\nu}\int\limits_{|z|<|y|}|f(z)|d\mu_\nu(z). \label{eq:eq4.2}
\end{eqnarray}
Let us estimate the above integral.
\begin{eqnarray}
    \int\limits_{|z|<|y|}|f(z)|d\mu_\nu(z)&=&\sum\limits_{k=0}^\infty \int\limits_{2^{-(k+1)}|y|\le|z|<2^{-k}|y|}|f(z)|d\mu_\nu(z)\nonumber\\
    &=&\sum\limits_{k=0}^\infty \int\limits_{2^{-(k+1)}|y|\le|z|<2^{-k}|y|}|f(z)|z|^\alpha|z|^{-\alpha}d\mu_\nu(z)\nonumber\\
    &\le&C\sum\limits_{k=0}^\infty (2^{-k}|y|)^{-\alpha}\int\limits_{2^{-(k+1)}|y|\le|z|<2^{-k}|y|}|z|^\alpha|f(z)|d\mu_\nu(z)\nonumber\\
    &\le&C\sum\limits_{k=0}^\infty (2^{-k}|y|)^{-\alpha}\int\limits_{|z|<2^{-k}|y|}|z|^\alpha|f(z)|d\mu_\nu(z)\nonumber\\
  &\le&C\sum\limits_{k=0}^\infty (2^{-k}|y|)^{-\alpha}\bigg(\int\limits_{|z|<2^{-k}|y|}|z|^{\alpha p}|f(z)|^pd\mu_\nu(z)\bigg)^{\frac{1}{p}}\nonumber\\
  &&\times(\mu_\nu(B(0,2^{-k}|y|)))^{\frac{1}{p^\prime}}\nonumber\\
    &\le& C\||\cdot|^\alpha f\|_{L^{p,q}({\mathbb{R}},d\mu_\nu)}\sum\limits_{k=0}^{\infty} (2^{-k}|y|)^{-\alpha+ \frac{d_\nu}{p'}+ \frac{d_\nu}{p}-\frac{d_\nu}{q}}\nonumber\\
    &=&C|y|^{-\alpha+d_\nu-\frac{d_\nu}{q}}\||\cdot|^\alpha f\|_{L^{p,q}({\mathbb{R}},d\mu_\nu)}\sum\limits_{k=0}^\infty (2^{-k})^{-\alpha+d_\nu-\frac{d_\nu}{q}}\nonumber\\
    &{\le}&C|y|^{-\alpha+d_\nu-\frac{d_\nu}{q}}\||\cdot|^\alpha f\|_{L^{p,q}({\mathbb{R}},d\mu_\nu)},\label{eq:eq4.3}       
\end{eqnarray}
where the above series converges since $\alpha<\frac{d_\nu}{q^\prime}$. Together with \eqref{eq:eq4.2} and \eqref{eq:eq4.3}, we obtain
    \begin{eqnarray}
        J_1(y)&\le& C|y|^{\gamma-d_\nu}|y|^{-\alpha+d_\nu-\frac{d_\nu}{q}}\||\cdot|^\alpha f\|_{L^{p,q}({\mathbb{R}},d\mu_\nu)}\nonumber\\
        &=& C|y|^{\gamma-\alpha-\frac{d_\nu}{q}}\||\cdot|^\alpha f\|_{L^{p,q}({\mathbb{R}},d\mu_\nu)}.\label{eq:eq4.4}
    \end{eqnarray}
Now we will estimate the right hand side integral of \eqref{eq1} for $i=1$:

\begin{eqnarray}
    &&\int\limits_{B(x,r)}|y|^{-\beta s}(J_1(y))^s\tau^\nu_{-x}\chi_{B(0,r)}(y)d\mu_\nu(y)\nonumber\\
    &\le& C\||\cdot|^\alpha f\|^s_{L^{p,q}({\mathbb{R}},d\mu_\nu)}\int\limits_{B(x,r)}|y|^{s(\gamma-\alpha-\beta-\frac{d_\nu}{q})}\tau^\nu_{-x}\chi_{B(0,r)}(y)d\mu_\nu(y) \nonumber\\
    &=&C\||\cdot|^\alpha f\|^s_{L^{p,q}({\mathbb{R}},d\mu_\nu)}\int\limits_{B(x,r)}|y|^{-\frac{s}{t}d\nu}\tau^\nu_{-x}\chi_{B(0,r)}(y)d\mu_\nu(y),\label{eq:eq4.5}
\end{eqnarray}
using the relation $\frac{1}{t}=\frac{1}{q}-\frac{\gamma-\alpha-\beta}{d_\nu}$ and \eqref{eq:eq4.4}.
We consider the following two cases to estimate the above integral.\\
\textbf{Case 1.}
Suppose that $|x|\le r$. Then $B(x,r)\subseteq B(0,2r).$ Using the Lemma~\ref{7plem2}, we obtain
\begin{eqnarray}
      \int\limits_{B(x,r)}|y|^{-\frac{s}{t}d\nu}\tau^\nu_{-x}\chi_{B(0,r)}(y)d\mu_\nu(y)
    &\le&\int\limits_{B(0,2r)}|y|^{-\frac{s}{t}d\nu}d\mu_\nu(y)\nonumber\\   
    &=&C\int\limits_0^{2r}y^{d_\nu(1-\frac{s}{t})-1}dy\nonumber\\
    &\le& Cr^{d_\nu(1-\frac{s}{t})}.\label{eq:eq4.6}   
\end{eqnarray}
\textbf{Case 2.}
Next, suppose $|x|>r$. Then $B(x,r)\subseteq B(0,2|x|)$. Again, making use of Lemma~\ref{7plem2} and the assumption $d_\nu>t$, we get
\begin{eqnarray}
    \int\limits_{B(x,r)}|y|^{-\frac{s}{t}d\nu}\tau^\nu_{-x}\chi_{B(0,r)}(y)d\mu_\nu(y)
     &\le&C\int\limits_{B(0,2|x|)}|y|^{-\frac{s}{t}d\nu}\bigg(\frac{r}{|x|}\bigg)^{d_\nu-1}|y|^{d_\nu-1}dy\nonumber\\
     &\le& C\bigg(\frac{r}{|x|}\bigg)^{d_\nu-1}\int\limits_0^{2|x|}y^{d_\nu(1-\frac{s}{t})-1}dy\nonumber\\
     &\le& C\bigg(\frac{r}{|x|}\bigg)^{d_\nu-1}(|x|)^{d_\nu(1-\frac{s}{t})}\nonumber\\
     &=&Cr^{d_\nu -1}(|x|)^{1-\frac{s}{t}d_\nu}\nonumber\\
     &\le& Cr^{d_\nu -1}r^{1-\frac{s}{t}d_\nu}\nonumber\\
     &=&Cr^{d_\nu(1-\frac{s}{t})}. \label{eq:eq4.7}   
\end{eqnarray}
Combining \eqref{eq:eq4.6} and \eqref{eq:eq4.7}, we get
\begin{eqnarray}
    \int\limits_{B(x,r)}|y|^{-\frac{s}{t}d\nu}\tau^\nu_{-x}\chi_{B(0,r)}(y)d\mu_\nu(y)\le Cr^{d_\nu(1-\frac{s}{t})}. \label{eq:eq4.8} 
\end{eqnarray}
Thus, from \eqref{eq:eq4.5} and \eqref{eq:eq4.8}, we conclude that
\begin{eqnarray}
    \int\limits_{B(x,r)}|y|^{-\beta s}(J_1(y))^s\tau^\nu_{-x}\chi_{B(0,r)}(y)d\mu_\nu(y)\le Cr^{d_\nu(1-\frac{s}{t})}\||\cdot|^\alpha f\|^s_{L^{p,q}({\mathbb{R}},d\mu_\nu)}. \label{eq:eq4.9}
\end{eqnarray}

\textbf{Step 2.} Here, we estimate the term
\begin{eqnarray}
    \int\limits_{B(x,r)}|y|^{-\beta s}(J_2(y))^s\tau^\nu_{-x}\chi_{B(0,r)}(y)d\mu_\nu(y). \label{eq3}
\end{eqnarray}
 First, consider the case  $\gamma>\alpha+\beta\ge 0$. We recall that $J_2$ is given by (from \eqref{eq.eq5}) $J_2(y)=\int\limits_{\frac{|y|}{2}\le|z|\le 2|y|}\tau_y^\nu |z|^{\gamma-d_\nu}|f(z)|d\mu_\nu(z)$, where the representation of $\tau_y^\nu |z|^{\gamma-d_\nu}$ is given in \eqref{eq2}. Therefore, when $\frac{|y|}{2}\le|z|\le2|y|$ with $|\xi|\le|y|$, we obtain
\begin{eqnarray*}
    (y^2+z^2-2z\xi)^{\frac{\alpha+\beta}{2}} \le (y^2+z^2+2|z||\xi|)^{\frac{\alpha+\beta}{2}}
                 &\le& (y^2+z^2+2|z||y|)^{\frac{\alpha+\beta}{2}}\\
                 &\le& (4|z|^2+|z|^2+2|z|2|z|)^{\frac{\alpha+\beta}{2}}\\
                 &\le& C|z|^{\alpha+\beta}\le C|z|^{\alpha}|y|^{\beta},
\end{eqnarray*}
and
\begin{eqnarray}
    \frac{(y^2+z^2-2z\xi)^{\frac{\gamma-d_\nu}{2}}}{|z|^{\alpha}|y|^{\beta}}&\le& C(y^2+z^2-2z\xi)^{-(\frac{\alpha+\beta}{2})+\frac{\gamma-d_\nu}{2}}\nonumber\\
    &=&C(y^2+z^2-2z\xi)^{\frac{\gamma-\alpha-\beta-d_\nu}{2}}\nonumber\\
    &=&C(y^2+z^2-2z\xi)^{\frac{\tilde{\gamma}-d_\nu}{2}}, \label{eq:eq4.14}
\end{eqnarray}
where $\tilde{\gamma}=\gamma-\alpha-\beta>0$. Hence, \eqref{eq2} and \eqref{eq:eq4.14} lead to
\begin{eqnarray}
    \frac{\tau^\nu_y|z|^{\gamma-d_\nu}}{|z|^{\alpha}|y|^{\beta}}&=&\int\limits_{\mathbb{R}}\frac{(y^2+z^2-2z\xi)^{\frac{\gamma-d_\nu}{2}}}{|z|^\alpha|y|^\beta}d\mu_y^\nu(\xi)\nonumber\\
    &\le&C\int\limits_{\mathbb{R}}(y^2+z^2-2z\xi)^{\frac{\tilde{\gamma}-d_\nu}{2}}d\mu_y^\nu(\xi)=C\tau^\nu_y|z|^{\tilde{\gamma}-d_\nu}. \label{eq:eq4.15}
\end{eqnarray}
Therefore, from \eqref{eq:eq4.15} we have
\begin{eqnarray}
    |y|^{-\beta}J_2(y)&=&|y|^{-\beta}\int\limits_{\frac{|y|}{2}\le|z|\le2|y|} \tau^\nu_y|z|^{\gamma-d_\nu}|f(z)|d\mu_\nu(z)\nonumber\\
    &=&\int\limits_{\frac{|y|}{2}\le|z|\le2|y|} \frac{\tau^\nu_y|z|^{\gamma-d_\nu}}{|z|^\alpha |y|^\beta}|z|^\alpha|f(z)|d\mu_\nu(z)\nonumber\\
    &\le&C\int\limits_{\frac{|y|}{2}\le|z|\le2|y|}\tau^\nu_y|z|^{\tilde{\gamma}-d_\nu}|\tilde{f}(z)|d\mu_\nu(z)\nonumber\\
    &\le&C\int\limits_{\mathbb{R}}\tau^\nu_y|z|^{\tilde{\gamma}-d_\nu}|\tilde{f}(z)|d\mu_\nu(z)=CI^\nu_{\tilde{\gamma}}|\tilde{f}|(y),  \label{eq4} 
\end{eqnarray}
where $\tilde{f}(z)=|z|^{\alpha}f(z)$. We remark here that the Dunkl fractional integral operator $I^\nu_{\tilde{\gamma}}$ is well defined since $0<\tilde{\gamma}<d_\nu$, which can be easily shown as follows:
\begin{eqnarray*}
    d_\nu>\gamma\ge \tilde{\gamma}=\gamma-\alpha-\beta>0.
\end{eqnarray*}
Furthermore, we observe from the assumptions of Theorem~4.1, $~\frac{1}{t}=\frac{1}{q}-\frac{\tilde{\gamma}}{d_\nu},~1<p\le q<\frac{d_\nu}{\tilde{\gamma}}$ and the parameters $s$ and $t$ that satisfy $1<s\le t<\infty$ with $\frac{s}{t}=\frac{p}{q}$. Therefore, using \eqref{eq4} and Lemma~\ref{lem2}, we obtain
\begin{eqnarray}
    \int\limits_{B(x,r)}|y|^{-\beta s}(J_2(y))^s\tau^\nu_{-x}\chi_{B(0,r)}(y)d\mu_\nu(y)&\le&C
    \int\limits_{B(x,r)}(I^\nu_{\tilde{\gamma}}(\tilde{|f|})(y))^s\tau^\nu_{-x}\chi_{B(0,r)}(y)d\mu_\nu(y)\nonumber\\
    &\le&C
    \int\limits_{\mathbb{R}}(I^\nu_{\tilde{\gamma}}(\tilde{|f|})(y))^s\tau^\nu_{-x}\chi_{B(0,r)}(y)d\mu_\nu(y)\nonumber\\
    &=&C\int\limits_{B(0,r)}\tau^\nu_x(I^\nu_{\tilde{\gamma}}(\tilde{|f|})^s(y)d_\nu(y)\nonumber\\
&\le&C{r^{d_\nu(1-\frac{s}{t})}}\|I^\nu_{\tilde{\gamma}}(|\tilde{f}|)\|^s_{L^{s,t}({\mathbb{R}},d\mu_\nu)}\nonumber\\
&\le&C{r^{d_\nu(1-\frac{s}{t})}}\|\tilde{f}\|^s_{L^{p,q}({\mathbb{R}},d\mu_\nu)}\nonumber\\
&=&Cr^{d_\nu(1-\frac{s}{t})}\||\cdot|^{\alpha}f\|^s_{L^{p,q}({\mathbb{R}},d\mu_\nu)}.\label{eq:eq4.17}
\end{eqnarray}
Let us now consider the case $\gamma=\alpha+\beta>0$, leading to $p=s$, $q=t$ and resulting the inequalities $1<p\le q<\infty$. Then \eqref{eq3} reduces to 
\begin{eqnarray*}
    \int\limits_{B(x,r)}|y|^{-\beta p}(J_2(y))^p\tau^\nu_{-x}\chi_{B(0,r)}(y)d\mu_\nu(y).
\end{eqnarray*}
In order to estimate the above integral, we consider the following two cases: $|x|\le r$ and $|x|>r$.\\
\textbf{Case 1.}
For $|x|\le r$, we have $B(x,r)\subseteq B(0,2r)$. Again, by using the Lemma~\ref{7plem2}, we compute
\begin{eqnarray}
    &&\int\limits_{B(x,r)}|y|^{-\beta p}(J_2(y))^p\tau^\nu_{-x}\chi_{B(0,r)}(y)d\mu_\nu(y)\nonumber\\
    &\le&\int\limits_{B(0,2r)}|y|^{-\beta p}(J_2(y))^pd\mu_\nu(y)\nonumber\\
    &=&\int\limits_{|y|\le 2r}|y|^{-\beta p}\bigg(\int\limits_{\frac{|y|}{2}\le|z|\le2|y|} \tau^\nu_y|z|^{\gamma-d_\nu}|f(z)|d\mu_\nu(z)\bigg)^pd\mu_\nu(y)\nonumber\\
    &=&\int\limits_{|y|\le 2r}|y|^{-\beta p}\bigg(\int\limits_{\frac{|y|}{2}\le|z|\le2|y|} \tau^\nu_y|z|^{\gamma-d_\nu}|z|^{-\alpha}|z|^{\alpha}|f(z)|d\mu_\nu(z)\bigg)^pd\mu_\nu(y)\nonumber\\
    &\le&C\int\limits_{|y|\le 2r}|y|^{-(\alpha+\beta)p}\bigg(\int\limits_{\frac{|y|}{2}\le|z|\le2|y|}\tau^\nu_y|z|^{\gamma-d_\nu}|\tilde{f}(z)|d\mu_\nu(z)\bigg)^pd\mu_\nu(y)\nonumber\\
    &=&C\sum\limits_{k=0}^\infty\int\limits_{\frac{2r}{2^{k+1}}<|y|\le\frac{2r}{2^k}}|y|^{-\gamma p}\bigg(\int\limits_{\frac{|y|}{2}\le|z|\le2|y|}\tau^\nu_y|z|^{\gamma-d_\nu}|\tilde{f}(z)|d\mu_\nu(z)\bigg)^pd\mu_\nu(y). \label{eq5}
\end{eqnarray}
For $\frac{|y|}{2}\le|z|\le2|y|$ and $\frac{2r}{2^{k+1}}<|y|\le\frac{2r}{2^k}$ we have
\begin{eqnarray}
    2^{-(k+1)}r\le |z|\le 2^{-k+2}r\\ [6pt]\label{eq:eq4.18}
    \text{and}\nonumber\\ [6pt]
    |z|\le2|y|\le3|y|\le3\cdot\frac{2r}{2^k}.\label{eq:eq4.22}
\end{eqnarray}
Thus, we get from \eqref{eq5}
\begin{eqnarray}
    &&\int\limits_{B(x,r)}|y|^{-\beta p}(J_2(y))^p\tau^\nu_{-x}\chi_{B(0,r)}(y)d\mu_\nu(y)\nonumber\\
    &\le&C\sum\limits_{k=0}^\infty\int\limits_{\frac{2r}{2^{k+1}}<|y|\le\frac{2r}{2^k}}|y|^{-\gamma p}\bigg(\int\limits_{2^{-(k+1)}r\le |z|\le 3\cdot\frac{2r}{2^k}}\tau^\nu_y|z|^{\gamma-d_\nu}|\tilde{f}(z)|d\mu_\nu(z)\bigg)^pd\mu_\nu(y)\nonumber\\
    &\le&C\sum\limits_{k=0}^\infty\bigg(\frac{2r}{2^{k+1}}\bigg)^{-\gamma p}\int\limits_{\frac{2r}{2^{k+1}}<|y|\le\frac{2r}{2^k}}\bigg(\int\limits_{2^{-(k+1)}r\le |z|\le 3\cdot\frac{2r}{2^k}}\tau^\nu_y|z|^{\gamma-d_\nu}|\tilde{f}(z)|d\mu_\nu(z)\bigg)^pd\mu_\nu(y)\nonumber\\
   &\le&C\sum\limits_{k=0}^\infty\bigg(\frac{2r}{2^{k+1}}\bigg)^{-\gamma p}
\int\limits_{\mathbb{R}}\bigg(\int\limits_{\mathbb{R}}\tau^\nu_y|z|^{\gamma-d_\nu}|\tilde{f}(z)|\chi_{\{2^{-(k+1)}r\le |z|\le 3\cdot\frac{2r}{2^k}\}}(z)d\mu_\nu(z)\bigg)^pd\mu_\nu(y)\nonumber\\
&=&C\sum\limits_{k=0}^\infty\bigg(\frac{2r}{2^{k+1}}\bigg)^{-\gamma p}\||\cdot|^{\gamma-d_\nu}\ast_k|\tilde{f}|\chi_{\{2^{-(k+1)}r\le |z|\le 3\cdot\frac{2r}{2^k}\}}\|^p_{L^p(\mathbb{R},d\mu_\nu)}\nonumber\\
&\le&C\sum\limits_{k=0}^\infty(2^{-k}r)^{-\gamma p}\||\cdot|^{\gamma-d_\nu}\chi_{|\cdot|\le3\cdot\frac{2r}{2^k}}\|^p_{L^1(\mathbb{R},d\mu_\nu)}\nonumber\\&&\times\|\tilde{f}\chi_{\{2^{-(k+1)}r\le |z|\le 3\cdot\frac{2r}{2^k}\}}\|^p_{L^p(\mathbb{R},d\mu_\nu)}, \label{eq:eq4.19}
\end{eqnarray}
by applying Young's inequality (see Lemma~\ref{lem3}) with $\xi=1$ and $p=r$.
We will now estimate the integral $\||\cdot|^{\gamma-d_\nu}\chi_{|\cdot|\le3\cdot\frac{2r}{2^k}}\|^p_{L^1(\mathbb{R},d\mu_\nu)}$.
\begin{eqnarray}
    \||\cdot|^{\gamma-d_\nu}\chi_{|\cdot|\le3\cdot\frac{2r}{2^k}}\|_{L^1(\mathbb{R},d\mu_\nu)}&=&\int\limits_{B(0,3\cdot\frac{2r}{2^k})}|x|^{\gamma-d_\nu}d\mu_\nu(x)\nonumber\\
    &=&\int\limits_{B(0,3\cdot\frac{2r}{2^k})}|x|^{\gamma-1}dx\le C(2^{-k}r)^\gamma. \label{eq:eq4.20}
\end{eqnarray}
Thus, combining \eqref{eq:eq4.19} and \eqref{eq:eq4.20} and using $p<q$, we obtain 
\begin{eqnarray}
    &&\int\limits_{B(x,r)}|y|^{-\beta p}(J_2(y))^p\tau^\nu_{-x}\chi_{B(0,r)}(y)d\mu_\nu(y)\nonumber\\
    &\le&C\sum_{k=0}^\infty(2^{-k}r)^{-\gamma p}(2^{-k}r)^{\gamma p}\|\tilde{f}\chi_{\{2^{-(k+1)}r\le |z|\le 3\cdot\frac{2r}{2^k}\}}\|^p_{L^p}\nonumber 
      \end{eqnarray}
    \begin{eqnarray}
    &\le&C\sum\limits_{k=0}^{\infty}\int\limits_{B(0,3\cdot\frac{2r}{2^k})}|\tilde{f}(z)|^pd\mu_\nu(z)\nonumber\\
    &\le&C\|\tilde{f}\|^p_{L^{p,q}}\sum\limits_{k=0}^\infty\bigg(\frac{r}{2^k}\bigg)^{d_\nu(1-\frac{p}{q})}\nonumber\\
    &{\le}&Cr^{d_\nu(1-\frac{p}{q})}\|\tilde{f}\|^p_{L^{p,q}({\mathbb{R}},d\mu_\nu)}\nonumber\\
    &\le&Cr^{d_\nu(1-\frac{p}{q})}\||\cdot|^\alpha f\|^p_{L^{p,q}({\mathbb{R}},d\mu_\nu)}\label{eq:eq4.21}.   
\end{eqnarray}
\textbf{Case 2.} For $|x|> r$, we have $B(x,r)\subseteq B(0,2|x|)$. Again, applying the Lemma~\ref{7plem2}, we compute
\begin{eqnarray}
     &&\int\limits_{B(x,r)}|y|^{-\beta p}(J_2(y))^p\tau^\nu_{-x}\chi_{B(0,r)}(y)d\mu_\nu(y)\nonumber\\
    &\le&C\bigg(\frac{r}{|x|}\bigg)^{d_\nu-1}\int\limits_{B(0,2|x|)}|y|^{-\beta p}(J_2(y))^pd\mu_\nu(y)\nonumber\\
    &\le&C\bigg(\frac{r}{|x|}\bigg)^{d_\nu-1}\int\limits_{B(0,2|x|)}|y|^{-\beta p}\bigg(\int\limits_{\frac{|y|}{2}\le|z|\le2|y|}\tau^\nu_y|z|^{\gamma-d_\nu}|f(z)|d\mu_\nu(z)\bigg)^pd\mu_\nu(y)\nonumber\\
    &\le&C\bigg(\frac{r}{|x|}\bigg)^{d_\nu-1}\int\limits_{|y|\le 2|x|}|y|^{-(\alpha+\beta)p}\bigg(\int\limits_{\frac{|y|}{2}\le|z|\le2|y|}\tau^\nu_y|z|^{\gamma-d_\nu}|\tilde{f}(z)|d\mu_\nu(z)\bigg)^pd\mu_\nu(y)\nonumber\\
    &=&C\bigg(\frac{r}{|x|}\bigg)^{d_\nu-1}\nonumber\\&&\times \sum_{k=0}^\infty\int\limits_{\frac{2|x|}{2^{k+1}}<|y|\le\frac{2|x|}{2^k}}|y|^{-\gamma p}\bigg(\int\limits_{\frac{|y|}{2}\le|z|\le2|y|}\tau^\nu_y|z|^{\gamma-d_\nu}|\tilde{f}(z)|d\mu_\nu(z)\bigg)^pd\mu_\nu(y). \label{eq:eq5.1}
    \end{eqnarray}
    Now for $\frac{|y|}{2}\le|z|\le2|y|$ and $\frac{2|x|}{2^{k+1}}<|y|\le\frac{2|x|}{2^k}$, we can derive $\frac{|x|}{2^{k+1}}\le |z|\le 3\cdot\frac{2|x|}{2^k}$, following the same steps as in  case~1 of step~2. Therefore, with $p<q$, \eqref{eq:eq5.1} leads to
    \begin{eqnarray}
    &&\int\limits_{B(x,r)}|y|^{-\beta p}(J_2(y))^p\tau^\nu_{-x}\chi_{B(0,r)}(y)d\mu_\nu(y)\nonumber\\
    &{\le}&C\bigg(\frac{r}{|x|}\bigg)^{d_\nu-1}\nonumber\\
    &&\times \sum_{k=0}^\infty\int\limits_{\frac{2|x|}{2^{k+1}}<|y|\le\frac{2|x|}{2^k}}|y|^{-\gamma p}\bigg(\int\limits_{\frac{|x|}{2^{k+1}}\le |z|\le 3\cdot\frac{2|x|}{2^k}}\tau^\nu_y|z|^{\gamma-d_\nu}|\tilde{f}(z)|d\mu_\nu(z)\bigg)^pd\mu_\nu(y)\nonumber
      \end{eqnarray}
    \begin{eqnarray}
    &=&C\bigg(\frac{r}{|x|}\bigg)^{d_\nu-1}\nonumber\sum\limits_{k=0}^\infty \bigg(\frac{2||x|}{2^{k+1}}\bigg)^{-\gamma p}\||\cdot|^{\gamma-d_\nu}\ast_k |\tilde{f}|\chi_{\{\frac{|x|}{2^{k+1}}\le |z|\le 3\cdot\frac{2|x|}{2^k}\}}\|^p_{L^p({\mathbb{R}},d\mu_\nu)}\nonumber\\
    &\le&C\bigg(\frac{r}{|x|}\bigg)^{d_\nu-1}\nonumber\sum\limits_{k=0}^\infty \bigg(\frac{2||x|}{2^{k+1}}\bigg)^{-\gamma p}\||\cdot|^{\gamma-d_\nu}\chi_{\{|\cdot|\le 3\cdot\frac{2|x|}{2^k}\}}\|^p_{L^1({\mathbb{R}},d\mu_\nu)}\\&&\times \||\tilde{f}\chi_{\frac{|x|}{2^{k+1}}\le|\cdot|\le3\cdot\frac{2|x|}{2^k}}\|_{L^p({\mathbb{R}},d\mu_\nu)}^p\nonumber\\
    &{\le}&C\bigg(\frac{r}{|x|}\bigg)^{d_\nu-1}\nonumber\sum\limits_{k=0}^\infty \bigg(\frac{|x|}{2^{k}}\bigg)^{-\gamma p}\bigg(\frac{|x|}{2^k}\bigg)^{\gamma p}\||\tilde{f}|\chi_{\frac{|x|}{2^{k+1}}\le|\cdot|\le3\cdot\frac{2|x|}{2^k}}\|_{L^p(\mathbb{R},d\mu_\nu)}^p\nonumber\\
    &\le&C\bigg(\frac{r}{|x|}\bigg)^{d_\nu-1}\sum\limits_{k=0}^\infty \int\limits_{B(0,3\frac{2|x|}{2^k})}|\tilde{f}(z)|^pd\mu_\nu(z)\nonumber \\
&\le&C\|\tilde{f}\|_{L^{p,q}(\mathbb{R},d\mu_\nu)}^p\bigg(\frac{r}{|x|}\bigg)^{d_\nu-1}\sum\limits_{k=0}^\infty\bigg(\frac{|x|}{2^k}\bigg)^{d_\nu(1-\frac{p}{q})}\nonumber\\
&{\le}&C\|\tilde{f}\|_{L^{p,q}(\mathbb{R},d\mu_\nu)}^p\bigg(\frac{r}{|x|}\bigg)^{d_\nu-1}(|x|)^{d_\nu(1-\frac{p}{q})}\nonumber\\
&=&C\|\tilde{f}\|^p_{L^{p,q}(\mathbb{R},d\mu_\nu)} r^{d_\nu-1} |x|^{1-d_\nu\frac{p}{q}}.\nonumber
\end{eqnarray}
Now our assumption $d_\nu>t$ and the fact $t=q$ when $\gamma=\alpha+\beta$ lead to $d_\nu>q$. This implies that $d_\nu > \frac{q}{p},~\forall\, p \ge 1.$ Therefore
\begin{eqnarray}
\int\limits_{B(x,r)}|y|^{-\beta p}(J_2(y))^p\tau^\nu_{-x}\chi_{B(0,r)}(y)d\mu_\nu(y)
&{\le}&C\|\tilde{f}\|_{L^{p,q}(\mathbb{R},d\mu_\nu)}^p r^{d_\nu-1}r^{1-d_\nu\frac{p}{q}}\nonumber\\
&=&Cr^{d_\nu(1-\frac{p}{q})}\|\tilde{f}\|^p_{L^{p,q}(\mathbb{R},d\mu_\nu)}.\nonumber\\
&=&Cr^{d_\nu(1-\frac{p}{q})}\||\cdot|^\alpha f\|^p_{L^{p,q}(\mathbb{R},d\mu_\nu)}. \label{eq:eq4.23}
\end{eqnarray}
Combining \eqref{eq:eq4.21} and \eqref{eq:eq4.23}, we have
\begin{eqnarray*}
   \int\limits_{B(x,r)}|y|^{-\beta p}(J_2(y))^p\tau^\nu_{-x}\chi_{B(0,r)}(y)d\mu_\nu(y)\le Cr^{d_\nu(1-\frac{p}{q})}\||\cdot|^\alpha f\|^p_{L^{p,q}(\mathbb{R},d\mu_\nu)}.\label{eq:eq4.24}
\end{eqnarray*}
Thus in step~2, for both consequences $\gamma>\alpha+\beta$ and $\gamma=\alpha+\beta$, we obtain
\begin{eqnarray}
   \int\limits_{B(x,r)}|y|^{-\beta s}(J_2(y))^s\tau^\nu_{-x}\chi_{B(0,r)}(y)d\mu_\nu(y)\le Cr^{d_\nu(1-\frac{s}{t})}\||\cdot|^\alpha f\|^s_{L^{p,q}(\mathbb{R},d\mu_\nu)}.\label{eq:eq5.2}
\end{eqnarray}

\textbf{Step 3.} Finally, we consider the last term $J_3(y)=\int\limits_{|z|>2|y|} \tau^\nu_y|z|^{\gamma-d_\nu}|f(z)|d\mu_\nu(z)$.\\
Since $|z|>2|y|$ and $|\xi|\le|y|$, it follows that
\begin{eqnarray*}
    \sqrt{y^2+z^2-2z\xi}\ge \sqrt{y^2+z^2-2|y||z|}=|z|-|y|>\frac{|z|}{2}.
\end{eqnarray*}
Again using Lemma~\ref{7plem1} and proceeding as in step 1, we obtain
\begin{eqnarray}
    \tau^\nu_y|z|^{\gamma-d_\nu}\le C|z|^{\gamma-d_\nu}.\label{eq:eq4.10}
\end{eqnarray}
Therefore, using \eqref{eq:eq4.10}, we have
\begin{eqnarray}
J_3(y)&\le&C\int\limits_{|z|>2|y|}|z|^{\gamma-d_\nu}|f(z)|d\mu_\nu(z)\nonumber\\    
&=& C\sum\limits_{k=1}^\infty \int\limits_{2^k|y|<|z|\le 2^{k+1}|y|}|z|^{\gamma-d_\nu-\alpha}|f(z)||z|^{\alpha}d\mu_\nu(z)\nonumber\\ 
&\le& C\sum\limits_{k=1}^\infty (2^{k+1}|y|)^{\gamma-d_\nu-\alpha}\int\limits_{|z|\le 2^{k+1}|y|}|f(z)||z|^{\alpha}d\mu_\nu(z)\nonumber\\
&\le& C\sum\limits_{k=1}^\infty (2^{k+1}|y|)^{\gamma-d_\nu-\alpha+\frac{d_\nu}{p^\prime}}\bigg(\int\limits_{|z|\le 2^{k+1}|y|}|f(z)|^p|z|^{\alpha p}d\mu_\nu(z)\bigg)^{\frac{1}{p}}\nonumber\\
&\le&C\||\cdot|^\alpha f\|_{L^{p,q}({\mathbb{R}},d\mu_\nu)}\sum\limits_{k=1}^\infty (2^{k+1}|y|)^{\gamma-d_\nu-\alpha+\frac{d_\nu}{p^\prime}-{d_\nu(\frac{1}{q}-\frac{1}{p})}}\nonumber\\
&=&C|y|^{\gamma-\alpha-\frac{d_\nu}{q}}\||\cdot|^\alpha f\|_{L^{p,q}({\mathbb{R}},d\mu_\nu)}\sum\limits_{k=1}^\infty 2^{(k+1)({\gamma-\alpha-\frac{d_\nu}{q}})}.\label{eq:eq4.11}
\end{eqnarray}
Using the relations $\frac{1}{t}=\frac{1}{q}-\frac{\gamma-\alpha-\beta}{d_\nu}$ and $\beta<\frac{d_\nu}{t}$, it easily follows that $\gamma-\alpha-\frac{d_\nu}{q}<0$ and therefore the above series converges.
Hence, from \eqref{eq:eq4.11}, it follows that
\begin{eqnarray}
    J_3(y)\le C|y|^{\gamma-\alpha-\frac{d_\nu}{q}}\||\cdot|^\alpha f\|_{L^{p,q}({\mathbb{R}},d\mu_\nu)}.\label{eq:eq4.12}
\end{eqnarray}
Now we will estimate the right hand side of \eqref{eq1} for $i=3$, using the relation $\frac{1}{t}=\frac{1}{q}-\frac{\gamma-\alpha-\beta}{d_\nu}$ and \eqref{eq:eq4.8} and \eqref{eq:eq4.12}. Thus, we obtain
\begin{eqnarray}
    &&\int\limits_{B(x,r)}|y|^{-\beta s}(J_3(y))^s\tau^\nu_{-x}\chi_{B(0,r)}(y)d\mu_\nu(y)\nonumber\\ 
    &\le&C\||\cdot|^\alpha f\|_{L^{p,q}({\mathbb{R}},d\mu_\nu)}^s\int\limits_{B(x,r)}|y|^{s(\gamma-\alpha-\beta-\frac{d_\nu}{q})}\tau_{-x}^\nu \chi_{B(0,r)}(y)d\mu_\nu(y)\nonumber\\
    &=& C\||\cdot|^\alpha f\|_{L^{p,q}({\mathbb{R}},d\mu_\nu)}^s\int\limits_{B(x,r)}|y|^{-\frac{s}{t}d_\nu}\tau_{-x}^\nu \chi_{B(0,r)}(y)d\mu_\nu(y)\nonumber\\
    &\le& Cr^{d_\nu(1-\frac{s}{t})}\||\cdot|^\alpha f\|^s_{L^{p,q}({\mathbb{R}},d\mu_\nu)}. \label{eq:eq4.13}  
\end{eqnarray}

\textbf{Step 4.} From \eqref{eq1}, \eqref{eq:eq4.9}, \eqref{eq:eq5.2} and \eqref{eq:eq4.13}, it follows that
\begin{eqnarray*}
     \int\limits_{B(0,r)}\tau_x^\nu(|\cdot|^{-\beta s}|I_\gamma^\nu f|^s)(y)d\mu_\nu(y) 
     &\le&C\sum\limits_{i=1}^3\int\limits_{B(x,r)}|y|^{-\beta s}(J_i(y))^s\tau^\nu_{-x}\chi_{B(0,r)}(y)d\mu_\nu(y)\\
     &\le& Cr^{d_\nu(1-\frac{s}{t})}\||\cdot|^\alpha f\|^s_{L^{p,q}({\mathbb{R}},d\mu_\nu)}.
\end{eqnarray*}
Finally, taking the supremum over $r>0$ and $x\in \mathbb{R}$ in the above inequality, we
complete the proof.
\end{pf}

Using Theorem~\ref{thm1}, we obtain the weighted boundedness of the Dunkl fractional maximal function $M^\nu_\gamma$ on the Dunkl Morrey space as follows:

\begin{cor}
 Let $0<\gamma <d_\nu,~ \alpha,\beta \in \mathbb{R},~0\le\alpha+\beta  \le \gamma<d_\nu,~1<p\le q< \frac{d_\nu}{\gamma-\alpha-\beta}$. Assume that the parameters $s$ and $t$ satisfy
     $$1<s\le t <\infty,~\frac{1}{t}=\frac{1}{q}-\frac{\gamma-\alpha-\beta}{d_\nu},~\frac{s}{t}=\frac{p}{q} ~\text{and}~\alpha<\frac{d_\nu}{q'},~\beta<\frac{d_\nu}{t}.$$
     In addition, if we assume that $d_\nu>t$, then
     $$\||\cdot|^{-\beta}M_\gamma^\nu f\|_{{L^{s,t}}(\mathbb{R},d\mu_\nu)}\le C\||\cdot|^\alpha f\|_{{L^{p,q}}(\mathbb{R},d\mu_\nu)},~\forall |\cdot|^\alpha f\in {{L^{p,q}}(\mathbb{R},d\mu_\nu)},$$
     where $C$ is a positive constant independent of $f$.
\end{cor}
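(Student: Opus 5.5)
The plan is to reduce the corollary to Theorem~\ref{thm1} through a pointwise domination $M^\nu_\gamma f(x)\le C\, I^\nu_\gamma |f|(x)$ for every $x\in\mathbb{R}$. Once this holds, the monotonicity of the weighted Dunkl Morrey norm in $|\cdot|^{-\beta}|g|$ finishes the argument. Indeed, the quantity $\int_{B(0,r)}\tau^\nu_x(|\cdot|^{-\beta s}|g|^s)\,d\mu_\nu$ is monotone in $|g|$, because $\tau^\nu_x$ applied to nonnegative functions is represented, via the identity $\int \tau_x^\nu F\,\chi_{B(0,r)}\,d\mu_\nu=\int F\,\tau^\nu_{-x}\chi_{B(0,r)}\,d\mu_\nu$, against the nonnegative weight $\tau^\nu_{-x}\chi_{B(0,r)}$ of Lemma~\ref{7plem2}. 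Applying Theorem~\ref{thm1} to $|f|$, which has the same weighted norm as $f$, then gives
$$\||\cdot|^{-\beta}M^\nu_\gamma f\|_{L^{s,t}}\le C\||\cdot|^{-\beta}I^\nu_\gamma|f|\|_{L^{s,t}}\le C\||\cdot|^{\alpha}f\|_{L^{p,q}}.$$
All hypotheses carry over verbatim, since they are identical to those of Theorem~\ref{thm1}.

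To prove the pointwise bound, fix $x$ and $r>0$. Using the duality identity for $\tau^\nu_x$ and the symmetry $\tau^\nu_y g(x)=\tau^\nu_x g(y)$, I rewrite
$$I^\nu_\gamma|f|(x)=\int_{\mathbb{R}}\tau^\nu_x|f|(y)\,|y|^{\gamma-d_\nu}\,d\mu_\nu(y),$$
possibly with $-x$ in place of $x$; this is harmless since the same reflection appears in the definition of $M^\nu_\gamma$, and one can also use the evenness of $K^\nu_\gamma$. This is exactly the manipulation used at the start of the proof of Lemma~\ref{lem2}. Since $\tau^\nu_x|f|\ge 0$ and $|y|^{\gamma-d_\nu}\ge r^{\gamma-d_\nu}$ on $B(0,r)$, restricting the integral to $B(0,r)$ gives
$$I^\nu_\gamma|f|(x)\ge r^{\gamma-d_\nu}\int_{B(0,r)}\tau^\nu_x|f|(y)\,d\mu_\nu(y)=b_\nu^{1-\gamma/d_\nu}\,\mu_\nu(B(0,r))^{-(1-\gamma/d_\nu)}\int_{B(0,r)}\tau^\nu_x|f|\,d\mu_\nu,$$
where I used $\mu_\nu(B(0,r))=b_\nu r^{d_\nu}$. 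Taking the supremum over $r>0$ yields $M^\nu_\gamma f(x)\le b_\nu^{\gamma/d_\nu-1}I^\nu_\gamma|f|(x)$.

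The only delicate point is the positivity of $\tau^\nu_x|f|$, since the measure $\sigma_{x,y}$ is in general signed. I would justify it by the positivity of $\tau^\nu_x$ on radial nonnegative functions, which Lemma~\ref{7plem1}(iii) gives through a probability measure. For general $f$, I would note that the definition of $M^\nu_\gamma$ already presupposes $\tau_x^\nu|f|$ (equivalently, pairing $|f|$ against $\tau^\nu_{-x}\chi_{B(0,r)}\ge0$ and $\tau^\nu_x|\cdot|^{\gamma-d_\nu}\ge 0$). So the comparison should be carried out in the dual form $\int|f|\,\tau^\nu_{-x}\chi_{B(0,r)}\,d\mu_\nu$ versus $\int|f|\,\tau^\nu_{-x}|\cdot|^{\gamma-d_\nu}\,d\mu_\nu$. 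There the needed kernel inequality is $\tau^\nu_{-x}\big(|\cdot|^{\gamma-d_\nu}-r^{\gamma-d_\nu}\chi_{B(0,r)}\big)\ge 0$. This holds because the function inside is radial, nonnegative and decreasing, so Lemma~\ref{7plem1}(iii) applies with a probability measure. Checking this step is where I expect the only real care to be needed.
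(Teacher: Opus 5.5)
Your proposal is correct and is essentially the paper's proof. It has the same three ingredients: the pointwise bound $M^\nu_\gamma f(x)\le C\,I^\nu_\gamma|f|(-x)$ from $|y|^{\gamma-d_\nu}\ge r^{\gamma-d_\nu}$ on $B(0,r)$, monotonicity of the weighted Morrey quantity through the nonnegative weight $\tau^\nu_{-x}\chi_{B(0,r)}$, and Theorem~\ref{thm1} applied to $|f|$.

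Your dual-form comparison via $\tau^\nu_{-x}\big(|\cdot|^{\gamma-d_\nu}-r^{\gamma-d_\nu}\chi_{B(0,r)}\big)\ge 0$ is more careful than the paper, which tacitly assumes $\tau^\nu_x|f|\ge 0$ pointwise even though $\sigma_{x,y}$ is signed. The reflection $x\mapsto -x$, which the paper also drops silently, is harmless for a different reason than the one you give: $|\cdot|^{-\beta}$ is even and the Dunkl Morrey norm is invariant under $g\mapsto g(-\cdot)$, whereas evenness of $K^\nu_\gamma$ alone does not give $I^\nu_\gamma|f|(x)=I^\nu_\gamma|f|(-x)$.
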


\begin{pf}
    Let $r>0$ be given. Then
    \begin{eqnarray}
        \frac{1}{\mu_\nu(B(0,r))^{1-\frac{\gamma}{d_\nu}}}\int\limits_{B(0,r)}\tau^\nu_x|f|(y)d\mu_\nu(y)&=&\frac{1}{r^{d_\nu-\gamma}}\int\limits_{B(0,r)}\tau^\nu_x|f|(y)d\mu_\nu(y)\nonumber\\
        &\le&C\int\limits_{B(0,r)}\tau^\nu_x|f|(y)\frac{1}{|y|^{d_\nu-\gamma}}d\mu_\nu(y)\nonumber\\
        &\le&C\int\limits_{\mathbb{R}}\tau^\nu_x|f|(y)\frac{1}{|y|^{d_\nu-\gamma}}d\mu_\nu(y)\nonumber\\
        &=&C\int\limits_{\mathbb{R}}|f|(y)\tau^\nu_{-x}|y|^{\gamma-d_\nu}d\mu_\nu(y)=CI_\gamma^\nu|f|(-x).\nonumber
    \end{eqnarray}
  Finally, taking the supremum over $r>0$ in the above inequality, we have
  \begin{eqnarray}
      M^\nu_\gamma f(x)\le CI_\gamma^\nu|f|(-x), \forall x\in \mathbb{R}.\label{eq6.1}
  \end{eqnarray}
  Now for $x\in \mathbb{R}$ and $R>0$, we consider 
  \begin{eqnarray}
      &&R^{d_\nu(\frac{1}{t}-\frac{1}{s})}\bigg(\int\limits_{B(0,R)}\tau_x^\nu(|\cdot|^{-\beta s}|M_\gamma^\nu f|^s)(y)d\mu_\nu(y)\bigg)^{\frac{1}{s}}\nonumber\\
      &=&R^{d_\nu(\frac{1}{t}-\frac{1}{s})}\bigg(\int\limits_{\mathbb{R}}|y|^{-\beta s}|M^\nu_\gamma f|^s(y)\tau_{-x}^\nu \chi_{B(0,R)}(y)d\mu_\nu(y)\bigg)^{\frac{1}{s}}\nonumber\\
      &\le&R^{d_\nu(\frac{1}{t}-\frac{1}{s})}\bigg(\int\limits_{\mathbb{R}}|y|^{-\beta s}(I^\nu_\gamma |f|)^s(y)\tau_{-x}^\nu \chi_{B(0,R)}(y)d\mu_\nu(y)\bigg)^{\frac{1}{s}}\nonumber\\
      &=&R^{d_\nu(\frac{1}{t}-\frac{1}{s})}\bigg(\int\limits_{B(0,R)}\tau_x^\nu(|\cdot|^{-\beta s}(I_\gamma^\nu |f|)^s)(y)d\mu_\nu(y)\bigg)^{\frac{1}{s}}\nonumber\\
      &\le&\||\cdot|^{-\beta}I_\gamma^\nu |f|\|_{L^{s,t}(\mathbb{R},d\mu_\nu)}\nonumber\\
      &\le&\||\cdot|^\alpha f\|_{L^{p,q}(\mathbb{R},d\mu_\nu)},\nonumber
  \end{eqnarray}
  using the relation \eqref{eq6.1} and Theorem~\ref{thm1}. Therefore, taking the supremum over $R>0$ and $x\in \mathbb{R}$ in the above inequality, we complete the proof.
\end{pf}

  \bibliographystyle{amsplain}
  \bibliographystyle{abbrv}
\noindent\textbf{Data availability statement:} Not applicable.

\noindent\textbf{Conflicts of interest:} Both authors declare that they have no conflicts of interest.


\end{document}